\def\titlerunning#1{\gdef\titrun{#1}}
\def\author#1{\gdef\autrun{\def\and{\unskip, }#1}\gdef\@author{#1}}
\def\address#1{{\def\and{\\\hspace*{18pt}}\renewcommand{\thefootnote}{}%
\footnote {#1}}%
\markboth{\autrun}{\titrun}}
\def\email#1{e-mail: #1}
\tikzstyle{vert}=[circle,draw=blue!50,fill=blue!20,thick]
\tikzstyle{diedge}=[->,shorten <=1pt,>=angle 90,semithick]
\tikzstyle{myedge}=[->,shorten <=1pt,>=angle 90,semithick]
\tikzstyle{uedge}=[-,shorten <=1pt,>=angle 90]
\tikzstyle{utedge}=[-,shorten <=1pt,>=angle 90,line width=2pt]
\tikzstyle{krouzek}=[rounded corners=4, draw=black, fill=white!20]
\tikzstyle{fkrouzek}=[rounded corners=4, draw=black, fill=black!20]
\newcommand{\MyFig}[1]{\begin{figure}[ht]#1\end{figure}}
\newtheorem{theorem}{Theorem}[section]
\newtheorem*{theorem*}{Theorem}
\newtheorem{lemma}[theorem]{Lemma}
\newtheorem{proposition}[theorem]{Proposition}
\newtheorem{definition}[theorem]{Definition}
\theoremstyle{definition}
\newcommand{\relstr}[1]{\mathbb{#1}}
\newcommand{\alg}[1]{\mathbf{#1}}
\newcommand{\Pol}{\mathop{\mathrm{Pol}}}
\DeclareMathOperator\absorbs{\trianglelefteq}
\DeclareMathOperator\Jabsorbs{\trianglelefteq_J}
\DeclareMathOperator\Neigh{N}
\DeclareMathOperator\Branch{Br}
\newcommand{\NP}{NP}
\newcommand{\AND}{\wedge}
\begin{document}

\titlerunning{Deciding absorption in relational structures}

\title{Deciding absorption in relational structures}

\author{Libor Barto  \and Jakub Bul\'in}

%% First author (Note: The order of the items here is important!)
%\author[L. Barto]{Libor Barto}
%\email{libor.barto@gmail.com}
%\address{Department of Algebra, Faculty of Mathematics and Physics, Charles University in Prague, Sokolovsk{\' a} 83, 18675 Prague 8, Czech Republic}

%\author[J. Bul{\' i}n]{Jakub Bul{\' i}n}
%\email{jakub.bulin@gmail.com}
%\address{Department of Mathematics, University of Colorado Boulder, Boulder, Colorado 80309-0395, USA}

\date{}

\maketitle

\address{Department of Algebra, Faculty of Mathematics and Physics, Charles University in Prague, Sokolovsk\'a 83, 18675 Prague 8, Czech Republic;\email{libor.barto@gmail.com}}
\address{Department of Mathematics, University of Colorado Boulder, Boulder, Colorado 80309-0395, USA; \email{jakub.bulin@gmail.com}}
\thanks{Both authors were supported by the Grant Agency of the Czech Republic, grant 13-01832S.  The second author was also supported by the Polish National Science Centre (NCN) grant 2011/01/B/ST6/01006.
}

\begin{abstract}
We prove that for finite, finitely related algebras the concepts of an absorbing subuniverse and a J\'onsson absorbing subuniverse coincide. Consequently, it is decidable whether a given subset is an absorbing subuniverse of the polymorphism algebra of a given  relational structure.
\end{abstract}

\section{Introduction}

A finite algebra $\alg{A}$  is called \emph{finitely related} if its clone of term operations is equal to the polymorphism clone of some relational structure with finite signature.
In~\cite{Bar13} it was proved that every finite, finitely related algebra with J\'onsson terms has a near unanimity term; consequently, it is decidable whether a given relational structure has a near unanimity polymorphism. 
In the present paper, we generalize this result and its consequence. A partial result in the same direction already appeared in~\cite{Bulin14}.

Absorption (J\'onsson absorption, respectively) generalizes near unanimity operations (J\'onsson operations) in the following way.
A subuniverse $B$ or a subalgebra $\alg{B}$ of an algebra $\alg{A}$ is \emph{absorbing}, written $B \absorbs \alg{A}$, if $\alg{A}$ has an idempotent term $t$ (an \emph{absorption term}) such that
\[
t(B,B,\dots, B, A, B, \dots, B) \subseteq B
\]
for every position of $A$. We also say that $B$ absorbs $\alg{A}$, that $t$ witnesses the absorption $B \absorbs \alg{A}$, etc.

A subuniverse $B$ is called \emph{J\'onsson absorbing}, written $B \Jabsorbs \alg{A}$, if $\alg{A}$ has a sequence of (necessarily idempotent) terms $d_0$, $d_1$, \dots, $d_n$ (a \emph{J\'onsson absorption chain}) such that
\begin{align*}
  &d_i(B,A,B)\subseteq B\text{ for all }i\leq n,\\
  &d_i(x,y,y)\approx d_{i+1}(x,x,y)\text{ for all }i<n,\\
  &d_0(x,y,z)\approx x,\text{ and } d_n(x,y,z)\approx z.
\end{align*}

Note that these terms do not correspond to the standard J\'onsson terms, but to
directed J\'onsson terms introduced in~\cite{KKMM}. However, the definition
via the original J\'onsson terms gives an equivalent concept of J\'onsson absorption~\cite{KKMM}.

The relation of absorption to near unanimity terms and J\'onsson terms is as follows.
Clearly, every singleton subuniverse $\{a\}$ of $\alg{A}$ is absorbing (J\'onsson absorbing, respectively) whenever $\alg{A}$ has a near unanimity term (J\'onsson terms, respectively). A short argument (see eg. \cite{BKazda,Bulin14}) shows that the converse is also true, that is, a finite algebra $\alg{A}$ has a near unanimity term (J\'onsson terms, respectively) if and only if every singleton is an absorbing subuniverse (J\'onsson absorbing subuniverse) of the full idempotent reduct of $\alg{A}$. 

Absorption and J\'onsson absorption played a key role in the proofs of several results concerning
Maltsev conditions and the complexity of the constraint satisfaction problem~(eg.~\cite{BK09a,B11,BK12,BK14,BKS15,BKW12}).
Absorption is stronger than J\'onsson absorption (see Section~\ref{sec:proof}) and is sometimes easier to work with.
Our main theorem shows that, in fact, these concepts coincide for finitely related algebras. A special case, for congruence meet-semidistributive algebras, was proved in \cite{Bulin14}.

\begin{theorem} \label{thm:main}
Let $B$ be a subuniverse of a finite, finitely related algebra $\alg{A}$. Then $B \absorbs \alg{A}$ if and only if $B \Jabsorbs \alg{A}$. 
\end{theorem}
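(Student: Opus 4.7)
The implication $B \absorbs \alg{A} \Rightarrow B \Jabsorbs \alg{A}$ holds for every finite algebra with no finite relatedness assumption: given an absorption term $t$, one builds a directed J\'onsson chain by a suitable substitution of variables, as discussed in Section~\ref{sec:proof}. I focus on the converse, where finite relatedness is essential.

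Assume $B \Jabsorbs \alg{A}$ is witnessed by directed J\'onsson terms $d_0, d_1, \dots, d_n$, and fix a relational structure $\relstr{R}$ on $A$ of finite signature with maximum relation arity $k$, such that $\Pol(\relstr{R}) = \Pol(\alg{A})$. The plan is to exhibit, for some arity $m$, an idempotent $m$-ary polymorphism $t$ of $\relstr{R}$ satisfying $t(B, \dots, B, A, B, \dots, B) \subseteq B$ in every position. I would rephrase this existence problem as a membership question in a single subpower: for each $m$, let $\alg{S}_m$ be the subalgebra of $\alg{A}^N$ (where $N$ indexes the ``witness inputs'', i.e.\ the $m$-tuples of values at which absorption and idempotency are required) generated by the $m$ coordinate-projection tuples. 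An $m$-ary absorption term exists if and only if $\alg{S}_m$ meets $B^N$ on the non-diagonal coordinates.

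The finite relatedness hypothesis enters as follows: $\alg{S}_m$ admits a primitive positive definition over $\relstr{R}$ whose atomic constraints each involve at most $k$ coordinates. Consequently, if no absorption term exists at any $m$, then the obstruction witnessing $\alg{S}_m \cap B^N = \emptyset$ is already visible on some bounded number of coordinates, and by a compactness/pigeonhole argument over configurations in $A^k$, as $m$ grows we may pass to an obstruction of bounded ``shape'' independent of $m$.

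The central step, modelled on the approach of~\cite{Bar13} and partially carried out in~\cite{Bulin14}, is to derive a contradiction between such a bounded-arity obstruction and the J\'onsson absorption chain. From $d_0, \dots, d_n$ I would iteratively build ``partial absorption'' terms of increasing reach, tracking how the absorbing behaviour propagates from position to position when the terms are composed with themselves and with projections; the bound $k$ on obstruction arity then lets a sufficiently long iteration stabilise and produce a term that genuinely absorbs in every position. The principal obstacle will be this combinatorial step: a directed J\'onsson chain provides only a one-dimensional gradient from $x$ to $z$ with absorbing behaviour in a single middle argument, whereas an absorption term demands simultaneous absorbing behaviour in all $m$ positions. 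Bridging this gap using only arity-$k$ information about obstructions is the heart of the argument.
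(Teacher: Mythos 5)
Your proposal correctly identifies the easy direction, correctly reformulates the converse as a search for an obstruction of bounded ``shape'' arising from the pp-definability of subpowers, and correctly senses that the crux is to turn a bounded-arity obstruction against the J\'onsson chain into a contradiction. But this is precisely the point at which the proposal stops; you write that ``bridging this gap \dots is the heart of the argument'' and leave it as a named obstacle rather than resolving it. As it stands, the sketch has not proved the converse implication.

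To be concrete about what is missing relative to a working argument: the paper negates absorption via the $B$-essential relation characterization (Proposition~\ref{prop:essential}) to obtain a $B$-essential subpower $U$ of enormous arity $\kappa$, and the subsequent work is a sequence of \emph{structural reductions on the pp-formula defining $U$}, not on partial terms. First, Zhuk's surgery (Subsection~\ref{subsec:surgery}) iteratively rewires the formula until it becomes a tree formula while preserving $B$-essentiality at arity $\kappa$; this step already uses Lemma~\ref{lem:loop} internally and is nontrivial. Second, the tree formula is pruned to a ``comb'' formula of length $\lambda\geq 3^{|A|}$ (Lemma~\ref{lem:comb}). Third, a pigeonhole on the pairs $(G_i,H_i)$ of left/right $B$-reachable sets along the comb produces indices $k<l$ with $(G_k,H_k)=(G_l,H_l)$, and the pair of subpowers $P\Jabsorbs\alg{Q}$ over the segment $[k,l]$ is fed into the connectivity lemma (Lemma~\ref{lem:walk}) to reach a contradiction. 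None of this matches your proposed device of ``iteratively building partial absorption terms of increasing reach'': a directed J\'onsson chain gives one-dimensional absorbing behaviour, and the paper does \emph{not} try to amplify this term-by-term; instead it exploits it to absorb connectivity in a carefully constructed binary subpower. Your ``compactness/pigeonhole over configurations in $A^k$'' is also too coarse as stated -- the pigeonhole that actually works is on the $3^{|A|}$ pairs of disjoint subsets of $A$ arising from the comb, and it only becomes available after the formula has been put into tree and then comb form. Without these reductions, there is no obvious way to extract a usable bounded obstruction from the mere fact that the constraints each have arity at most $k$.
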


Since it is decidable whether $B$ is a J\'onsson absorbing subuniverse of the polymorphism algebra of a given relational structure $\relstr{A}$, we immediately get the decidability of the corresponding problem for absorption. In fact, if we are guaranteed that the input subset $B$ is a subuniverse, then we can place this problem to the class \NP:

\begin{theorem} \label{thm:complexity}
The problem of deciding whether a given subuniverse of the polymorphism algebra of a given relational structure is (J\'onsson) absorbing, is in \NP.
\end{theorem}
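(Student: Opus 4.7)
The plan is to apply Theorem~\ref{thm:main} to equate absorption with J\'onsson absorption, and to exhibit a polynomial-size NP certificate for the latter. Since the input subuniverse $B$ lives in $\alg{A} = \Pol(\relstr{A})$, which is finite and finitely related by construction, Theorem~\ref{thm:main} yields that $B \absorbs \alg{A}$ iff $B \Jabsorbs \alg{A}$; so it suffices to place J\'onsson absorption in \NP.

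The natural certificate is a J\'onsson absorption chain $d_0, d_1, \dots, d_n$ of ternary polymorphisms of $\relstr{A}$, each presented as a function table $A^3 \to A$ of polynomial size $O(|A|^3 \log|A|)$. A polynomial-time verifier checks: (i) each $d_i$ preserves every relation of $\relstr{A}$; (ii) the absorption condition $d_i(B,A,B) \subseteq B$; (iii) the zigzag identities $d_i(x,y,y) \approx d_{i+1}(x,x,y)$ together with the boundary conditions $d_0(x,y,z) \approx x$ and $d_n(x,y,z) \approx z$. Each of these is polynomial in $|\relstr{A}|$ and $n\cdot|A|^3$.

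The main task is therefore to bound the chain length $n$ by a polynomial in the input size. For a chain of minimum length, a splicing argument shows that the boundary pairs $(d_i(x,x,y), d_i(x,y,y))$ of binary operations must be pairwise distinct: if $(d_i(x,x,y), d_i(x,y,y)) = (d_j(x,x,y), d_j(x,y,y))$ for $i<j$, the identities $d_i(x,y,y) = d_j(x,y,y) = d_{j+1}(x,x,y)$ allow us to excise the intervening sub-chain while preserving the zigzag conditions. Turning this into a polynomial bound on $n$ will require restricting the set of boundary pairs realisable by ternary polymorphisms satisfying the absorption condition, and this is where I expect the main difficulty to lie.

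An alternative route---which may sidestep chain-length questions entirely---is to invoke Theorem~\ref{thm:main} in the opposite direction to extract a single absorption term $t$, argue that the minimum arity of such a $t$ is polynomially bounded in $|\relstr{A}|$, and take $t$ itself as a succinct certificate: a single polymorphism of polynomial arity $k$ whose absorbing property is verified by $k \cdot |A|^k$ evaluations plus the usual polymorphism check against each relation. Whichever route is taken, the remaining work reduces to a combinatorial bound on the minimal witness, either the chain length $n$ or the arity $k$, in terms of the invariants of $\relstr{A}$ and $B$.
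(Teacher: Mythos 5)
There is a genuine gap: your proposal is missing the key lemma the paper uses, and neither of your two routes produces a polynomial-size certificate.

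Route 1 (exhibiting a J\'onsson absorption chain): the splicing argument shows that a minimum-length chain has pairwise distinct boundary pairs $(d_i(x,x,y),\,d_i(x,y,y))$, but this only bounds $n$ by the number of such pairs, which can be as large as $|A|^{2|A|^2}$ --- exponential in the input, not polynomial. You acknowledge this as ``the main difficulty,'' but it is not a technical detail to be pushed through: there is no obvious polynomial bound on the length of a minimal J\'onsson absorption chain, and the paper deliberately avoids needing one.

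Route 2 (a single absorption term as certificate) fails in two independent ways. First, the arity bound on the absorption term from Theorem~\ref{thm:core} is $\kappa = \tfrac{1}{2}(2\theta-2)^{3^{|A|}}+1$, doubly exponential in $|A|$, not polynomial. Second, even a term of polynomial arity $k$ has a table of size $|A|^k$, which is super-polynomial unless $k=O(1)$; since absorption generically requires arity that grows with $|A|$ (the paper even cites lower bounds doubly exponential in $|A|$), this route cannot yield a polynomial-size certificate.

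The missing ingredient is the local characterization of J\'onsson absorption, Theorem~\ref{thm:jabs_local}, imported from~\cite{BKazda}: $B \Jabsorbs \alg{A}$ if and only if for every $a,c,d\in A$ and $b_1,b_2\in B$, the digraph with edge set $\{(u,v)\mid \exists b\in B,\ (b,u,v)\in \langle(b_1,a,a),(b_2,c,c),(d,a,c)\rangle\}$ contains a directed path from $a$ to $c$. The paper's NP certificate is, for each of the polynomially many choices $(a,c,d,b_1,b_2)$, a $B$-colored directed path from $a$ to $c$ of length at most $|A|$, with each edge $(b,u,v)$ accompanied by the table (of size $|A|^3$) of a ternary polymorphism generating $(b,u,v)$ from the three given triples; each table is verified to be a polymorphism in polynomial time. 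Your proposal shares the high-level plan of reducing to J\'onsson absorption via Theorem~\ref{thm:main} and then seeking a succinct witness, but without the local characterization the witnesses you propose are not polynomially bounded.
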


In~\cite{BKazda}, absorption in general finite algebras (not necessarily finitely related) is characterized by means of J\'onsson absorption and cube term blockers. This result is then used to devise an algorithm for the algebraic version of the absorption problem, in which the algebra is given by the tables of the basic operations. However, we are not able to use the results in~\cite{BKazda} to simplify our proof of Theorem~\ref{thm:main}. 

\section{Preliminaries}

We use rather standard universal algebraic terminology~\cite{BS81,Berg11}. Algebras are denoted by capital letters $\alg{A}$, $\alg{B}$, \dots in boldface and relational structures by capitals in blackboard bold $\relstr{A}, \relstr{B}, \dots$. The same letters  $A$, $B$, \dots in the plain font are used to denote their universes. A subuniverse or a subalgebra of a power is called a \emph{subpower}. We mostly work with finite algebras that are idempotent, that is, each  basic operation $f$ satisfies the identity $f(x, x, \dots, x) \approx x$. 

The algebra of polymorphisms of a relational structure $\relstr{A}$ is denoted $\Pol \relstr{A}$. We assume that the reader is familiar with the Pol--Inv Galois correspondence~\cite{G68,BKKR69}. Recall, in particular, that $R$ is a subpower of $\Pol \relstr{A}$
(where $A$ is finite) if and only if $R$ can be pp-defined from $\relstr{A}$, i.e., defined by a formula of the form
\[
(\exists x_1) (\exists x_2)\dots S_1(x_{i_1}, x_{i_2}, \dots) \wedge S_2(x_{j_1}, \dots) \wedge \dots,
\]
where $S_1, S_2, \dots$ are relations in $\relstr{A}$ regarded as predicates in the formula. The clauses in pp-formulas will also be called \emph{constraints} and the tuple of variables in a constraint will be called the \emph{scope} of the constraint. 

The \emph{incidence multigraph} of a $\tau$-structure $\mathbb A$ is defined as a bipartite multigraph with parts $A$ and $\{(R,\bar a)\mid R\in\tau,\bar a\in R\}$ and edge relations $E_1,E_2,\dots$ defined by $(b,(R,\bar a))\in E_i$ if and only if $b=a_i$. 

By the \emph{degree} of $a\in A$ we mean its degree in the incidence multigraph. The \emph{leaves} of $\mathbb A$ are vertices $a\in A$ of degree at most one.  The \emph{neighborhood} of $a$, denoted $\Neigh(a)$, is the set of all vertices $b\in A$, $b\neq a$ which appear in some hyperedge containing $a$ (in other words, it is the set of vertices whose distance from $a$ in the incidence multigraph is two). 

The structure $\mathbb A$ is a \emph{relational tree} if its incidence multigraph is a tree (in particular, it has no multiple edges). For a pair of vertices $a\neq b\in A$ we define the \emph{branch} rooted at $a$ containing $b$, denoted $\Branch(a;b)$, to be the connected component which contains $b$ and is obtained from $\mathbb A$ by removing all but one hyperedges containing $a$.

A pp-formula $\Phi$ in the language $\tau$ can be naturally regarded as a $\tau$-structure (together with a labeling of free variables): its vertices are variables (both free and bound), and the relation corresponding to $R \in \tau$ consists of all tuples $\bar x$ which appear in $\Phi$ as a scope of some constraint of the form $R(\bar x)$. We call $\Phi$ a \emph{tree formula} if the corresponding relational structure is a relational tree, $\Phi$ is \emph{connected} if the structure is connected, etc.

A binary relation $E \subseteq A^2$ is often regarded as a digraph. A sequence $(a_0,a_1, \dots, a_n) \in A^{n+1}$ is called an \emph{$E$-walk} (of length $n$) if $(a_{i-1},a_i) \in E$ for each $i\in[n]$. Such a walk is \emph{closed} if $a_0=a_n$.

\section{Absorption}

In this section we introduce several facts about absorbing and J{\' o}nsson absorbing subuniverses which will be needed later in the proof.

\subsection{Absorption and pp-definitions}

The set of subuniverses of an algebra is closed under pp-definitions. The following simple lemma (see~\cite[Lemma 11]{BKazda}) is a version of this fact for absorption. 

\begin{lemma}\label{lem:abspp}
  Assume that a subpower $R$ of $\alg{A}$ is defined by 
  \[
    R=\{(x_1,\dots,x_n) \mid (\exists y_1\dots\exists y_m)\, R_1(\sigma_1)\AND
    R_2(\sigma_2)\AND\dots\AND R_k(\sigma_k)\},
  \]
  where $R_1,\dots,R_k$ are subpowers of $\alg{A}$
   and $\sigma_1$,\dots,$\sigma_k$ stand for sequences of (free or bound) variables. 
  Let $S_1,\dots,S_k$ be subpowers of $\alg{A}$ such that $S_i\absorbs \alg{R}_i$ (resp.
  $S_i\Jabsorbs \alg{R}_i$) for all $i$. Then the subpower
  \[
    S=\{(x_1,\dots,x_n)\mid (\exists y_1\dots\exists y_m)\, S_1(\sigma_1)\AND
    S_2(\sigma_2)\AND\dots\AND S_k(\sigma_k)\},
  \]
  absorbs (resp. J{\' o}nsson absorbs) $\alg{R}$.
\end{lemma}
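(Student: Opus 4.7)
The plan is to decompose a pp-definition into its two basic constituents — conjunction and existential projection — and verify that each preserves (J\'onsson) absorption. Projection is essentially free: if $R^*, S^*$ denote the auxiliary subpowers of $\alg{A}^{n+m}$ obtained from the conjunctions without the existential quantifier, and if $S^* \absorbs R^*$ via a term $t$, then the same $t$ witnesses $S \absorbs R$: given $r \in R$ and $s_1, \ldots \in S$, lift each to a tuple in $R^*$ (resp.\ $S^*$), apply $t$ componentwise, and project back. The analogous argument handles a J\'onsson chain applied componentwise. Padding an individual $R_i$ to full arity $n+m$ (letting the absent variables range freely) preserves absorption with the same term, so the task reduces to the following intersection statement: given subpowers $R_1, \ldots, R_k$ of $\alg{A}^N$ of the same arity with $S_i \absorbs R_i$, show $\bigcap_i S_i \absorbs \bigcap_i R_i$.

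By induction on $k$ it suffices to treat $k = 2$. Let $t_1, t_2$ be witnessing terms, of arities $n_1, n_2$. Consider the composed term
\[
t(x_{1,1}, \ldots, x_{n_1, n_2}) \;=\; t_1\bigl(t_2(x_{1,1}, \ldots, x_{1,n_2}),\, \ldots,\, t_2(x_{n_1,1}, \ldots, x_{n_1,n_2})\bigr).
\]
If the ``special'' position is $(p,q)$, holding some $r \in R_1 \cap R_2$, and all other inputs lie in $S_1 \cap S_2$, then the inner $t_2$-block in row $p$ outputs an element of $S_2$ by $S_2 \absorbs R_2$, while every other inner block receives only $S_2$-elements and thus outputs in $S_2$ by closure. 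For the outer $t_1$, the $p$-th argument now lies in $R_1$ (since $R_1$ is closed under $t_2$ and all original inputs lay in $R_1$), while the remaining arguments lie in $S_1$; hence $S_1 \absorbs R_1$ forces the output into $S_1$. Simultaneously all outer arguments lie in $S_2$, and closure of $S_2$ under $t_1$ places the output in $S_2$ as well.

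For J\'onsson absorption, given chains $(d_i^1)_i$ for $S_1 \Jabsorbs R_1$ and $(d_j^2)_j$ for $S_2 \Jabsorbs R_2$, the analogous composition
\[
e_{(i,j)}(x, y, z) \;=\; d_i^1\bigl(x,\, d_j^2(x, y, z),\, z\bigr),
\]
ordered lexicographically in $(i,j)$, provides the required chain. The within-row snake identity $e_{(i,j)}(x,y,y) = e_{(i,j+1)}(x,x,y)$ comes straight from $d_j^2(x,y,y) = d_{j+1}^2(x,x,y)$, while the row-transition $e_{(i,n_2)}(x,y,y) = e_{(i+1,0)}(x,x,y)$ reduces (via $d_{n_2}^2(x,y,z) = z$ and $d_0^2(x,y,z) = x$) to the identity for $d^1$; the outer endpoints $e_{(0,0)} = x$ and $e_{(n_1, n_2)} = z$ are immediate. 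Middle-coordinate absorption transfers exactly as in the absorption case, using that the middle argument $d_j^2(x, y, z)$ of the outer $d_i^1$ lies in $R_1$ by closure of $R_1$ under $d_j^2$. The only real bookkeeping obstacle is tracking which absorption and which closure is being invoked at each nesting level; no genuinely new idea beyond ``compose multiplicatively and use that subpowers are closed under all terms'' is required.
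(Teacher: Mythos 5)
Your proof is correct, and it follows the standard argument for this lemma (the paper itself gives no proof, deferring to \cite[Lemma~11]{BKazda}, where the same strategy is used): reduce to intersections via padding and projection, then handle intersections by the ``star'' composition $t_1\bigl(t_2(\cdot),\dots,t_2(\cdot)\bigr)$ of absorption terms, respectively by interleaving the two J\'onsson chains as $d_i^1(x,d_j^2(x,y,z),z)$ in lexicographic order. All the closure and absorption bookkeeping you describe checks out, so nothing further is needed.
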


\subsection{Essential relations}

An alternative characterization of absorption by means of $B$-essential relations, which is given in this subsection, will be essential in the proof of the main theorem.

When $R \subseteq A^n$ and $i \in \{1, \dots, n\}$, we denote by $\pi_{\widehat i}(R)$ the projection of $R$ onto the coordinates $\{1, \dots, n\} \setminus \{i\}$. 

\begin{definition}
A relation $R \subseteq A^n$ is called \emph{$B$-essential} if $R\cap B^n=\emptyset$ and, for every $i=1,2,\dots n$, 
we have $\pi_{\widehat i} (R)\cap B^{n-1}\neq \emptyset$.
\end{definition}

Observe that if $R \leq \alg{A}^n$ is $B$-essential, then by fixing the first coordinate to $B$, that is, by defining
\[
\{(a_2, \dots, a_n) \mid \exists b \in B,\, (b,a_2, \dots, a_n) \in R\},
\]
we get a $B$-essential subuniverse of $\alg{A}^{n-1}$.  
Therefore, the set of arities of $B$-essential subpowers of an algebra is a downset in $(\mathbb{N},\leq)$.

\begin{proposition}[{\cite[Proposition 16]{BKazda}}] \label{prop:essential}
Let $\alg{A}$ be a finite idempotent algebra and $B \leq \alg{A}$. Then $B \absorbs \alg{A}$ by an $n$-ary term if and only if $\alg{A}$ has no $n$-ary $B$-essential subpower. 
\end{proposition}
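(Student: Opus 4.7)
The plan is to prove the equivalence by verifying both implications: the forward direction is a one-line application of the absorbing term to witnessing tuples, while the converse requires constructing a $B$-essential subpower from the absence of an absorbing term.

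For the forward direction I would proceed by contradiction. Assume $B\absorbs\alg{A}$ via an $n$-ary term $t$ and, at the same time, that $R\le\alg{A}^n$ is $B$-essential. For each $i\in\{1,\dots,n\}$, the $B$-essentiality of $R$ supplies a tuple $\bar r^{(i)}\in R$ all of whose coordinates outside position $i$ lie in $B$. The image $t(\bar r^{(1)},\dots,\bar r^{(n)})$ belongs to $R$, and at each position $j$ only the input $r^{(j)}_j$ can fail to lie in $B$; absorption by $t$ therefore drives the whole tuple into $B^n$, contradicting $R\cap B^n=\emptyset$.

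For the converse I would argue the contrapositive. Let $X_i\subseteq A^n$ be the set of tuples whose unique non-$B$ coordinate sits at position $i$, and put $X=X_1\cup\dots\cup X_n$. Consider the subpower $F\le\alg{A}^X$ generated by the restricted projections $\pi_1|_X,\dots,\pi_n|_X$; its elements are exactly the restrictions $t|_X$ of $n$-ary term operations $t$ of $\alg{A}$. Because $t(B^n)\subseteq B$ holds automatically (as $B$ is a subuniverse), absorption by $t$ is equivalent to $t|_X\in B^X$. Thus the failure of $n$-ary absorption is exactly the statement $F\cap B^X=\emptyset$.

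From here I would extract an $n$-ary $B$-essential subpower. Pick $X'\subseteq X$ minimal with the property that $\pi_{X'}(F)\cap B^{X'}=\emptyset$; by minimality, $\pi_{X'}(F)$ is a $|X'|$-ary $B$-essential subpower of $\alg{A}$. The main obstacle — indeed the only point that requires an idea — is the lower bound $|X'|\ge n$. I would establish it by observing that each generator $\pi_i|_X$ already maps $X\setminus X_i$ into $B$: for a tuple $\bar x\in X_j$ with $j\ne i$ the value is $\bar x_i\in B$. Hence, were $X'\cap X_i=\emptyset$ for some $i$, the restriction $\pi_i|_{X'}$ would witness $\pi_{X'}(F)\cap B^{X'}\ne\emptyset$, a contradiction. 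Therefore $X'$ meets each of the pairwise disjoint sets $X_1,\dots,X_n$, so $|X'|\ge n$. Applying the downset property of arities of $B$-essential subpowers (the observation immediately preceding the proposition) descends from arity $|X'|\ge n$ to a $B$-essential subpower of arity exactly $n$, completing the contrapositive.
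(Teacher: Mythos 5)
Your proof is correct. Note that the paper itself does not prove this proposition --- it imports it from \cite[Proposition 16]{BKazda} --- so there is no in-paper argument to compare against; your argument is the standard one used there. Both directions check out: the forward direction correctly applies the absorbing term to the $n$ witnessing tuples of a putative $B$-essential relation, and the converse is the usual free-algebra construction (the subpower of $\alg{A}^X$ generated by the projections, whose elements are exactly the restrictions $t|_X$ of $n$-ary terms), with the minimal-$X'$ trick producing a $B$-essential projection and the observation that $\pi_i|_{X\setminus X_i}$ lands in $B$ forcing $X'$ to meet every $X_i$, hence $|X'|\geq n$; the downset property then yields arity exactly $n$. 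The only points worth making explicit are that idempotence of $\alg{A}$ is what lets you ignore the idempotence requirement on the absorption term in the equivalence ``no absorption term iff $F\cap B^X=\emptyset$,'' and that finiteness of $A$ is what makes $X$ finite so that $F$ is a genuine (finite-arity) subpower.
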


Note that using Proposition~\ref{prop:essential} we can define an absorbing subuniverse of $\Pol\relstr{A}$ in a purely relational way: a set $B \subseteq A$ is an absorbing subuniverse of $\Pol\relstr{A}$ if and only if $B$ is pp-definable from $\relstr{A}$ and no $B$-essential relation is pp-definable from $\relstr{A}$. A relational description of J\'onsson absorption can be obtained from Theorem~\ref{thm:jabs_local}.

\subsection{A connectivity lemma}

Absorption is used to absorb connectivity properties of subpowers. A technical lemma of this sort will be used in the final stage of the proof of Theorem~\ref{thm:main}.

\begin{lemma} \label{lem:walk}
Let $\alg{A}$ be a finite algebra, $G$, $H$ subsets of $A$, and $P$, $Q$ subuniverses of $\alg{A}^2$ such that
\begin{itemize}
\item[(i)] $P \Jabsorbs \alg{Q}$,
\item[(ii)] $(a,c) \in Q$ for some $a \in G$ and $c \in H$,
\item[(iii)] for every $c \in G$ there exists $a \in G$ such that $(a,c) \in P$, and
\item[(iv)] for every $a \in H$ there exists $c \in H$ such that $(a,c) \in P$.
\end{itemize}
Then there exists a directed $P$-walk from an element in $G$ to an element in $H$.
\end{lemma}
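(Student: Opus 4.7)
The plan is to apply the directed J\'onsson chain $d_0,\dots,d_n$ witnessing $P\Jabsorbs\alg{Q}$ coordinatewise to three carefully chosen walks of a common length $N$: a closed $P$-walk in $G$, a ``mixed'' $Q$-walk routed through the given $Q$-edge $(a,c)$, and a closed $P$-walk in $H$. Each resulting tuple $\vec r_i:=d_i(\vec u,\vec v,\vec w)$ will be a $P$-walk of length $N$, and the directed J\'onsson identity $d_i(x,y,y)=d_{i+1}(x,x,y)$ at the endpoints of the tuples will force consecutive $\vec r_i$'s to share endpoints, so that they concatenate into a single long $P$-walk from $G$ to $H$. Throughout we use that $P\subseteq Q$ (since $P$ is a subuniverse of $\alg{Q}$), so every $P$-edge is automatically a $Q$-edge.

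For the setup, (iii) together with the finiteness of $A$ gives, by iterating $P$-predecessors inside $G$ starting from $a$, a vertex $a^*\in G$ on a closed $P$-walk of some length $k_G$ lying inside $G$, and a $P$-walk $a^*\to\cdots\to a$ in $G$ of some length $m_G$. Symmetrically, (iv) produces $c^*\in H$ on a closed $P$-walk of length $k_H$ in $H$ and a $P$-walk $c\to\cdots\to c^*$ in $H$ of length $m_H$. Varying the choice of $a^*$ around its cycle shifts $m_G$ through every residue class modulo $k_G$ (and similarly for $c^*$ and $k_H$), so I can pick $a^*$ and $c^*$ so that $m_G+1+m_H\equiv 0\pmod{\gcd(k_G,k_H)}$; then for sufficiently large $N$ that is a common multiple of $k_G$ and $k_H$, I can write $N-m_G-1-m_H=N_1+N_2$ with $N_1\in k_G\mathbb{Z}_{\geq 0}$ and $N_2\in k_H\mathbb{Z}_{\geq 0}$.

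Now take $\vec u$ to be the closed $P$-walk at $a^*$ in $G$ of length $N$ (traversing the $k_G$-cycle $N/k_G$ times), $\vec w$ the analogous closed $P$-walk at $c^*$ in $H$, and $\vec v$ the length-$N$ $Q$-walk from $a^*$ to $c^*$ obtained by concatenating a cycle at $a^*$ of length $N_1$, the $P$-walk $a^*\to a$, the $Q$-edge $(a,c)$, the $P$-walk $c\to c^*$, and a cycle at $c^*$ of length $N_2$; all edges of $\vec v$ lie in $P\cup\{(a,c)\}\subseteq Q$. For each $i$, every consecutive pair of coordinates of $\vec r_i=d_i(\vec u,\vec v,\vec w)$ is an image under $d_i$ (applied in $\alg{A}^2$) of a triple in $P\times Q\times P$, hence lies in $P$, so $\vec r_i$ is a $P$-walk of length $N$. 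Its first coordinate is $z_i:=d_i(a^*,a^*,c^*)$, and its last coordinate $d_i(a^*,c^*,c^*)$ equals $z_{i+1}$ for $i<n$ by the J\'onsson identity with $x=a^*, y=c^*$; since $z_0=a^*$ and the last coordinate of $\vec r_{n-1}$ is $d_n(a^*,a^*,c^*)=c^*$, the concatenation $\vec r_0\vec r_1\cdots\vec r_{n-1}$ is a directed $P$-walk from $a^*\in G$ to $c^*\in H$, as required. The main technical nuisance is the number-theoretic alignment in the previous paragraph, ensuring one length $N$ works for all three of $\vec u,\vec v,\vec w$ simultaneously; that is exactly where the flexibility in choosing $a^*$ and $c^*$ along their cycles is needed.
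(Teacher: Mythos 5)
Your proof is correct and follows essentially the same strategy as the paper's: locate closed $P$-walks in $G$ and $H$ together with a $Q$-walk between them, all of a common length, then apply the directed J\'onsson chain coordinatewise and concatenate the resulting $P$-walks, using the identity $d_i(x,y,y)\approx d_{i+1}(x,x,y)$ at the endpoints. The only difference is that you make explicit the number-theoretic padding argument (adjusting $a^*,c^*$ along their cycles to fix the residue of $m_G+1+m_H$ modulo $\gcd(k_G,k_H)$, then decomposing $N-m_G-1-m_H$ into nonnegative multiples of $k_G,k_H$), which the paper compresses into the phrase ``choosing appropriate elements in the two closed $P$-walks and large enough $k$ divisible by the lengths of the closed $P$-walks.''
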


\begin{proof}
Let $d_0, \dots, d_n$ be a J\'onsson absorption chain witnessing $P \Jabsorbs \alg{Q}$.
Let $a \in G$, $c \in H$ be such that $(a,c) \in Q$. By (iii), there exists a directed $P$-walk to $a$ consisting of elements in $G$ of arbitrary length. Since $A$ is finite, it follows that there exists a $P$-walk to $a$ from an element in a closed $P$-walk and both walks lie entirely in $G$. Similarly, from (iv) it follows that there exists a directed $P$-walk from $c$ to an element in a closed $P$-walk, both walks being in $H$. Choosing appropriate elements in the two closed $P$-walks and large enough $k$ divisible by the lengths of the closed $P$-walks, we get closed $P$-walks
$(e_0, e_1, \dots, e_k=e_0) \in G^{k+1}$, $(f_0, f_1, \dots, f_k=f_0)\in G^{k+1}$ and a $Q$-walk $(e_0=g_0,g_1, \dots, g_k = f_0)\in A^{k+1}$.

Now for each $i\leq n$ we apply the $i$-th term $d_i$ to the columns of the following matrix.
\[
\begin{array}{llll}
e_0 			& e_1 & \dots & e_k = e_0 \\
e_0 = g_0 & g_1 & \dots & g_k = f_0 \\
f_0 			& f_1 & \dots & f_k = f_0
\end{array}
\]
Since consecutive elements in the first and third row are in $P$, consecutive elements in the second row are in $Q$, and $d_i(P,Q,P) \subseteq P$, then the obtained sequence is a $P$-walk from $d_i(e_0,e_0,f_0)$ to $d_i(e_0,f_0,f_0) = d_{i+1}(e_0,e_0,f_0)$.
By concatenating these walks we obtain a $P$-walk from $d_0(e_0,e_0,f_0) = e_0 \in G$ to $d_n(e_0,f_0,f_0) = f_0 \in H$, as required.
\end{proof}

\subsection{A loop lemma}

The Loop Lemma (originally from~\cite{BKN08b}, a simpler proof in~\cite{BK12}) says that a subuniverse $E \leq \alg{C}^2$ intersects the diagonal under some structural assumptions on $E$ and algebraic assumptions on $\alg{C}$. We will need the following variant.

\begin{lemma} \label{lem:loop}
Let $\alg{C}$ be a finite idempotent algebra, $\alg{P}, \alg{Q} \leq \alg{C}^2$, $P \Jabsorbs \alg{Q}$, $\Delta_C = \{(c,c)\mid c \in C\} \subseteq Q$, and assume that there exists a closed $P$-walk. Then $P$ intersects $\Delta_C$.  
\end{lemma}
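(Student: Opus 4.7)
My plan is to apply Lemma~\ref{lem:walk} together with a direct matrix construction using the J\'onsson chain $d_0,\ldots,d_n$ that witnesses $P\Jabsorbs Q$, combined with an inductive argument on $|C|$.

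First, I would set $G=H$ equal to the set of vertices lying on some closed $P$-walk; this set is nonempty, since it contains $a_0,\ldots,a_{k-1}$ from the given closed walk. Hypothesis (i) of Lemma~\ref{lem:walk} is given; (ii) holds because $\Delta_C\subseteq Q$ gives $(a,a)\in Q$ for every $a\in G$; and (iii), (iv) hold because every vertex on a closed $P$-walk has both a $P$-predecessor and a $P$-successor on the same walk. Lemma~\ref{lem:walk} therefore produces a directed $P$-walk inside $G$, which is the connectivity backbone of the argument but not yet a loop.

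To extract a loop, I would mimic the three-row matrix construction in the proof of Lemma~\ref{lem:walk}: take the first and third rows to be closed $P$-walks at two chosen vertices $a_0$ and $a_1$ (shifts of the given walk), and the middle row to be a $Q$-walk from $a_0$ to $a_1$ built from $\Delta_C\subseteq Q$ and the edge $(a_0,a_1)\in P\subseteq Q$. Applying $d_i$ column by column produces a $P$-walk for each $i$ (since the column pattern is $P,Q,P$), and the J\'onsson identities $d_i(x,y,y)=d_{i+1}(x,x,y)$ splice them at the boundary columns into a single long $P$-walk from $a_0$ to $a_1$; concatenating with a segment of the original walk from $a_1$ back to $a_0$ then yields a new closed $P$-walk.

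The hard part is forcing the resulting closed $P$-walk to have length one, that is, to be an actual loop in $P\cap\Delta_C$. I plan to handle this by induction on $|C|$: if the closed $P$-walk lies inside a proper subalgebra of $\alg{C}$, apply the inductive hypothesis to it, noting that the restrictions of $P$, $Q$, and $\Delta_C$ satisfy all the same assumptions; otherwise, the closed walk generates $\alg{C}$, in which case one must iterate the matrix construction above, each time picking the endpoints $a_0$, $a_1$ and the middle $Q$-walk so as to strictly shrink either the length of the shortest closed $P$-walk or the subalgebra it generates. Finiteness of $\alg{C}$ then forces the process to terminate at an element $(c,c)\in P\cap\Delta_C$, as required.
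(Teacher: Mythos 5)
Your overall strategy (induction on $|C|$, plus the three-row matrix trick borrowed from the proof of Lemma~\ref{lem:walk}) is the right starting point, but there is a genuine gap at the crucial step, and one preliminary step is vacuous.

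First, the initial invocation of Lemma~\ref{lem:walk} with $G=H$ equal to the set of vertices on closed $P$-walks gives nothing: its conclusion is merely that there is a $P$-walk from some vertex of $G$ to some vertex of $G$, which is trivially true since every element of $G$ already lies on a closed walk. You correctly note this ``is not yet a loop,'' but in fact it is not progress at all.

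Second, and more seriously, your reduction step has no mechanism. You write that one should ``iterate the matrix construction\dots so as to strictly shrink either the length of the shortest closed $P$-walk or the subalgebra it generates,'' but you offer no reason why such choices exist, and in fact the matrix-and-splice construction produces \emph{longer} walks, not shorter ones (the spliced walk has length $k(n+1)$ where $k$ is the length of the input walks). Nor is it clear why the new closed walk would generate a proper subalgebra. This is exactly where the paper's proof does something concrete that your plan lacks: it fixes $p$ on a closed $P$-walk, builds the chain of subuniverses $B_0=\{p\}$, $B_j=\{c:\exists b\in B_{j-1},\,(b,c)\in P\}$, and then splits into two cases. If some $B_k=C$ (with $k$ minimal), a pigeonhole/degree argument shows $B_{k-1}$ is a proper subuniverse containing a closed $P$-walk. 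If no $B_j$ equals $C$, finiteness gives $B_k=B_l$ for some $k<l$, and only \emph{here} is the three-row matrix used -- not to build a shorter walk, but to show that an entire closed $P$-walk (whose length is a multiple of $l-k$) lies inside the proper subuniverse $B_k$. Without some concrete device of this kind, the inductive step in your proposal does not go through.
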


\begin{proof}
We proceed by induction on $|C|$. The case $|C|=1$ is trivial. Assume that $|C|>1$. It suffices to find a proper subuniverse $B\lneqq \mathbf C$ which contains a closed $P$-walk.

Fix some element $p\in C$ lying in a closed $P$-walk. Let $B_0=\{p\}$ and for $j>0$ define inductively $B_j=\{c\in C\mid (\exists b\in B_{j-1})\,(b,c)\in P\}$. Note that all the sets $B_j$ are nonempty subuniverses of $\mathbf C$. The proof splits into two cases:

First, suppose that there exists $k>0$ such that $B_k=C$. Choose minimal such $k$ and set $B=B_{k-1}$. Since $B=B_{k-1}$ is contained in $B_k$, it follows that for every $b\in B$ there exists $c\in B$ such that $(c,b)\in P$. Consequently, by finiteness of $B$, $B$ must contain a closed $P$-walk. Note that $B\neq C$ follows from the minimality of $k$.

Second, assume that $B_j\neq C$ for all $j\geq 0$. Since there are only finitely many subsets of $C$, there must exist $k<l$ such that $B_k=B_l$. Fix such $k<l$ and define $B=B_k=B_l$. For every $c\in B$ there exists $b\in B$ and a closed $P$-walk of length $(l-k)$ from $b$ to $c$. Since $B$ is finite, it follows that there exist $b_0\in B$ and a closed $P$-walk $(b_0,b_1,\dots,b_{m-1},b_0)\in C^{m+1}$ whose length $m$ is a multiple of $(l-k)$. 

We will show that, in fact, all elements of this walk lie in $B$. Choose $j\in\{1,\dots,m-1\}$. Similarly as in the proof of Lemma \ref{lem:walk}, we apply a J\'onsson absorption chain $d_0, \dots, d_n$ witnessing $P \Jabsorbs \alg{Q}$ to all columns of the following matrix:

\[
\begin{array}{lllllllll}
b_0 & b_1 & \dots & b_{j-1} & b_j & b_{j+1} & \dots & b_{m-1} & b_0\\
b_0 & b_1 & \dots & b_{j-1} & b_j & b_j & \dots & b_j & b_j\\
b_j & b_{j+1} & \dots & \dots & \dots & \dots & \dots & b_{j-1} & b_j
\end{array}
\]
The first and third rows are $P$-walks while the second row is a $Q$-walk (here we use that both $P\subseteq Q$ and $\Delta_C\subseteq Q$). The obtained sequence is a $P$-walk from $d_i(b_0,b_0,b_j)$ to $d_i(b_0,b_j,b_j) = d_{i+1}(b_0,b_0,b_j)$. By concatenating these walks we obtain a $P$-walk from $b_0=d_0(b_0,b_0,b_j)$ to $b_j=d_n(b_0,b_j,b_j)$ whose length is a multiple of $(l-k)$. As $b_0\in B=B_k$, this shows that $b_j\in B_l=B$ which concludes the proof.
\end{proof}

\section{Proof of the main result} \label{sec:proof}

In this section we prove the main result, Theorem~\ref{thm:main}.

Let $\relstr{A}$ be a finite relational structure with finite signature, $\alg{A} = \Pol \relstr{A}$, and let $B$ be a subuniverse of $\alg{A}$. 
We aim to show that $B \absorbs \alg{A}$ if and only if $B \Jabsorbs \alg{A}$. 

The left-to-right implication is simple: if $t$ is an $n$-ary absorption term witnessing $B \absorbs \alg{A}$ then the sequence
$d_0, \dots, d_n$ defined by
\[
d_i(x,y,z) = t(\underbrace{z, \dots, z}_{i \times}, y, x, \dots, x)
\]
is a J\'onsson absorption chain witnessing $B \Jabsorbs \alg{A}$. 

Assume now that $B \Jabsorbs \alg{A}$. Since J\'onsson absorption terms are idempotent, we can assume that $\relstr{A}$ contains all the singleton unary relations and $\alg{A}$ is thus an idempotent algebra.
 We aim to show that $B \absorbs \alg{A}$. In fact, we will prove the following refinement.

\begin{theorem} \label{thm:core}
Let $\relstr{A}$ be a finite relational structure which contains all the singleton unary relations and whose relations all have arity at most $\theta$ (where $\theta\geq 2$). Assume that $B$ is a J\'onsson absorbing subuniverse of $\alg{A} = \Pol \relstr{A}$. Then $B \absorbs \alg{A}$ via a term of arity at most $\kappa=\frac{1}{2}(2\theta-2)^{3^{|A|}}+1$.
\end{theorem}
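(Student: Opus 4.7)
The plan is to argue by contradiction through Proposition~\ref{prop:essential}: I would show that under the assumption $B \Jabsorbs \alg{A}$, no $B$-essential subpower of $\alg{A}$ has arity $\geq \kappa$. Suppose for contradiction that such an $R \leq \alg{A}^n$ exists and fix one of \emph{minimum arity} $n$, with $n \geq \kappa$. Since $\alg{A} = \Pol \relstr{A}$ and $\relstr{A}$ contains all singleton unary relations, the Pol--Inv Galois correspondence yields a pp-formula $\Phi$ in the signature of $\relstr{A}$, with $n$ free variables, that defines $R$. I would then normalize $\Phi$ so that its associated relational structure is a tree: given any cycle in the incidence multigraph, one can use Lemma~\ref{lem:abspp} to extract from $\Phi$ a smaller pp-defined subpower that is still $B$-essential (by projecting out or identifying variables on the cycle), contradicting minimality of $n$. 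Each internal constraint has arity $\leq \theta$, so in the tree formula every internal vertex of the incidence multigraph contributes at most $2\theta - 2$ further branches after one branch is used as an ``incoming'' edge.

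The core step is to bound the depth of this tree using the J\'onsson absorption of $B$, which is where the factor $3^{|A|}$ enters. I would label every vertex $v$ of the tree by a function $A \to \{0,1,2\}$ recording, for each $a\in A$, a three-valued invariant encoding the behavior of $v$ under two natural constraints: namely, whether $a$ appears at $v$ in some solution of $\Phi$ when all other free variables are forced into $B$, when they are unrestricted in $A$, and some intermediate status (e.g.\ forced outside $B$). The number of such labels is at most $3^{|A|}$. Then along any root-to-leaf path I claim that two vertices with the same label allow a contraction: using the pp-formula together with the connectivity Lemma~\ref{lem:walk} and the loop Lemma~\ref{lem:loop} (applied to binary subpowers $P \Jabsorbs Q$ derived from the branches between the two vertices), one constructs a pp-definition of a strictly smaller $B$-essential subpower, again contradicting the choice of $n$. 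Hence every root-to-leaf path has length at most $3^{|A|}$, and counting leaves in a tree of branching $2\theta - 2$ and depth $3^{|A|}$ gives the bound $\tfrac{1}{2}(2\theta-2)^{3^{|A|}}$ on $n$, a contradiction.

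The main obstacle is precisely the contraction step in the middle of the depth bound: translating ``two tree vertices share a label'' into an actual smaller $B$-essential subpower. In the near-unanimity case of~\cite{Bar13} the absorption term is symmetric, so the contraction is a straightforward identification; with J\'onsson absorption only the directed chain $d_0, \dots, d_n$ is available, so the contraction has to respect the walk direction inherited from the chain. This is exactly why Lemmas~\ref{lem:walk} and~\ref{lem:loop} are phrased in terms of directed $P$-walks and J\'onsson-absorbed pairs $P \Jabsorbs Q$: applying them to the pair of paths in the tree between the two same-labeled vertices should yield the required $P$-walk that closes up into a loop after one iteration of the chain, producing the contraction. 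Choosing the three-valued labeling so that the hypotheses of Lemma~\ref{lem:walk} are genuinely satisfied at each matched pair of vertices is the delicate technical point; everything else is either routine pp-manipulation (via Lemma~\ref{lem:abspp}) or bookkeeping of arities and branching in the tree formula.
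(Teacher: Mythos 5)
Your high-level scaffolding is right — contradiction via Proposition~\ref{prop:essential}, reduce to a tree pp-formula, branching bounded by $2\theta-2$, a pigeonhole on a $3^{|A|}$-sized set of labels to bound the depth. But the crucial intermediate step, passing from an arbitrary pp-definition to a \emph{tree} pp-definition, is where the proposal has a genuine gap. You claim that cycles in the incidence multigraph can be removed by ``projecting out or identifying variables on the cycle,'' invoking Lemma~\ref{lem:abspp} and minimality of arity. This does not work: projecting out bound variables does not remove a cycle, projecting out free variables changes the arity of $R$ (so minimality of $n$ gives you nothing about what the formula looks like), and identifying variables changes the relation in a way that neither preserves $B$-essentiality nor breaks the cycle. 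There is no elementary contraction argument here. The paper instead uses Zhuk's surgery (Subsection~\ref{subsec:surgery}): a cycle is broken by splitting a bound variable $y$ into $y$ and $y_*$ with the unary constraint $C=\Phi(y)$ on both, chaining $|A|$ copies of the resulting formula, and then carefully selecting one coordinate per block via Lemma~\ref{lem:step_three}. Showing that the surgered relation is still $B$-essential is exactly where the Loop Lemma (Lemma~\ref{lem:loop}) is needed (Lemma~\ref{lem:step_two}); it is not used in the depth-bound step as you suggest, but in the cycle-breaking step.

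The depth-bound step in the paper also differs in detail from your sketch. After obtaining a tree formula, the paper extracts a linear ``comb'' formula of arity $\lambda\geq 3^{|A|}$ (Lemma~\ref{lem:comb}), and the $3^{|A|}$ pigeonhole is applied to the pairs $(G_i,H_i)$ of disjoint nonempty ``reachable-from-left'' and ``reachable-from-right'' sets, not to vertex labelings of the tree. The final contradiction is obtained by applying Lemma~\ref{lem:walk} (not Lemma~\ref{lem:loop}) to the binary relations $P\Jabsorbs Q$ that arise between two indices $k<l$ with $(G_k,H_k)=(G_l,H_l)$, together with a ``$P$ preserves $G$'' observation that contradicts the existence of a $P$-walk from $G$ to $H$. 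Your instinct that the directedness of the J\'onsson chain and Lemmas~\ref{lem:walk}, \ref{lem:loop} must be invoked at the matching step is in the right neighborhood, but you have conflated the two lemmas and placed them at the wrong stage of the argument. In short: the bookkeeping and branching count are fine, but the tree reduction is the real content and it is not supplied.
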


Assume the contrary. By Proposition~\ref{prop:essential}, $\alg{A}$ has a $B$-essential subpower $U$ of arity $\kappa$. Since $U$ is a subpower of $\alg{A}$, it can be pp-defined from $\relstr{A}$. The overall structure of the proof is as follows.

\begin{itemize}
\item We introduce some simplifying assumptions on $\mathbb A$ and the structure of the pp-formula defining $U$ (Subsection \ref{subsec:preproc}). These assumptions will be applied throughout our proof.
\item Then we use Zhuk's technique from~\cite{Zhuk14}. By a repeated application of the construction described in Subsection \ref{subsec:surgery} we modify the formula  to obtain a tree formula defining a (possibly different) $B$-essential subpower of the same arity $\kappa$.
\item In Subsection \ref{subsec:comb} we use the tree formula to obtain a structurally very simple pp-definition (a ``comb'' formula) of a $B$-essential subpower of a smaller, but still large arity $\lambda$ (where $\lambda$ corresponds to the length of a maximal path in the tree formula; roughly a logarithm of $\kappa$).
\item This simple pp-definition is then used to obtain a pair of subalgebras $\alg{P} \Jabsorbs \alg{Q}$ of $\alg{A}^2$ that will contradict Lemma~\ref{lem:loop} (Subsection \ref{subsec:contradiction}).

\end{itemize}

\subsection{Preprocessing}\label{subsec:preproc}
During the construction, some relations in pp-formulas are replaced by subpowers of $\alg{A}$ (of arity at most $\theta$) which are not necessarily in $\relstr{A}$. For this technical reason, we assume that each relation of arity at most $\theta$ which is pp-definable from $\relstr{A}$ is already in $\relstr{A}$. Note that we can safely make this assumption since adding pp-definable relations does not change the algebra of polymorphisms $\alg{A}$.

Using the above assumption we can modify the formula defining $U$ so that it satisfies the following properties. 
The resulting formula will be called a \emph{simplified form} of the original formula.
\begin{enumerate}[(1)]
\item The formula is connected.
\item The set of free variables is equal to the set of leaves.
\item All bound variables have degree 2 or 3.
\item All scopes of constraints are non-repeating sequences of variables.
\item There are no unary constraints.
\end{enumerate}
For item (1) note that if there were two free variables in different connected components, then the resulting relation $U$ would be a product of two relations of smaller arities. This cannot be the case for a $B$-essential relation. Components containing only bound variables are trivial and can be omitted. Items (2)-(5) can be achieved by a combination of introducing new variables and replacing relations by other, pp-definable relations. See \cite{Bvaleriote} for a detailed description of a similar simplification process in the case $\theta=2$. 
 
Let us remark here that with some weakening of the upper bound on $\kappa$ one could make a further simplifying assumption: namely that $\theta=2$ (this is done in \cite{Bar13,Bvaleriote,Bulin14} as well as in our illustrative figures) and even that $\relstr{A}$ contains a single binary relation (see \cite[Section 7]{Bulin14} for details). However, here we choose to present the proof without these reductions.

\subsection{Zhuk's surgery}\label{subsec:surgery}

Recall that $U \leq \alg{A}^{\kappa}$ is a $B$-essential relation. 
We take a simplified form of a pp-formula defining $U$ from $\relstr{A}$. We denote by $\Phi$ the quantifier-free part of this formula, and denote $x_1, \dots, x_{\kappa}$ the free variables. For an arbitrary sequence $y_1, \dots, y_m$ of variables in $\Phi$, we denote $\Phi(y_1,\dots,y_m)$ both the formula obtained by existentially quantifying over the remaining variables, and the $m$-ary relation defined by this formula. In particular, we have $U = \Phi(x_1, \dots, x_\kappa)$.

We describe a construction which transforms $\Phi$ into a new quantifier-free pp-formula $\Phi'$ such that $\Phi'(x_1, \dots, x_{\kappa})$ is still a $B$-essential relation of arity $\kappa$, possibly different from $U = \Phi(x_1, \dots, x_{\kappa})$. 

The construction depends on a variable $y \not\in \{x_1, \dots, x_{\kappa}\}$ and a constraint in $\Phi$ whose scope contains $y$, say $T(\dots, y, \dots)$. 
 
The construction is divided into three steps.
In the first step, we build from $\Phi$ a new formula $\Psi$ by %adding the constraints $D(x_1), \dots, D(x_{\kappa})$, 
adding a new variable $y_*$, removing the 
constraint $T(\dots, y, \dots)$, adding the constraint $T(\dots, y_*, \dots)$, and adding the unary constraints $C(y)$ and $C(y_*)$, where $C = \Phi(y)$. Figure~\ref{fig:first_step} depicts this modification in the case that our constraint is binary, of the form $T(y,z)$.

\MyFig{
  \centering
  \begin{subfigure}{.5\textwidth}
  \centering
  \begin{tikzpicture}
     \filldraw[fill=gray!20,draw=black] (1.6,4) -- (1.6,1) -- (4.4,1) -- (4.4,2.5) .. controls (2.8, 2.4) .. (1.6,4);
		 \draw (4.4,2.5) node[krouzek] {$z$} -- (1.6,4) node[krouzek] {$y$} node[midway, above=0pt] {$T$};
		 \draw (2,1) -- (2,0) node[krouzek] {$x_1$};
		 \draw (3,1) -- (3,0) node[krouzek] {$x_2$};
		 \draw (4,1) -- (4,0) node[krouzek] {$x_3$};
  \end{tikzpicture}
	\caption{Original formula $\Phi$.}
	\end{subfigure}%
	\begin{subfigure}{.5\textwidth}
	\centering
  \begin{tikzpicture}
     \filldraw[fill=gray!20,draw=black] (1.6,4) -- (1.6,1) -- (4.4,1) -- (4.4,2.5) .. controls (2.8, 2.4) .. (1.6,4) node[krouzek] {$y$} node[right=6pt] {$C$};
		 \draw (4.4,2.5) node[krouzek] {$z$} -- (5.5,4) node[krouzek] {$y_{*}$} node[midway, above left=-3pt] {$T$} node[right=8pt] {$C$};
		 \draw (2,1) -- (2,0) node[krouzek] {$x_1$} node[above left=3pt] {}; %{$D$};
		 \draw (3,1) -- (3,0) node[krouzek] {$x_2$} node[above left=3pt] {}; %{$D$};
		 \draw (4,1) -- (4,0) node[krouzek] {$x_3$} node[above left=3pt] {}; %{$D$};
  \end{tikzpicture}
	\caption{Resulting formula $\Psi$.}
	\end{subfigure}
  \caption{The first step for $\kappa=3$.}
  \label{fig:first_step}
}

In the second step, we define a new formula $\Theta$ as follows. For each $i \in [l]$, where $l = |A|$, we take a copy $\Psi^i$ of the formula $\Psi$ by renaming each variable $w$ in $\Psi$ to $w^i$. Then we take the conjunction of $\Psi^1$, \dots, $\Psi^l$ and identify variables $y^i_*$ and $y^{i+1}$ for each $i<l$.
% (alternatively, we could add the constraint $y^i_* = y^{i+1}$). 
The resulting formula is shown in Figure~\ref{fig:second_step}.

\MyFig{   
  \begin{center}
    \begin{tikzpicture}
     \filldraw[fill=gray!20,draw=black] (1.6,4) -- (1.6,1) -- (4.4,1) -- (4.4,2.5) .. controls (2.8, 2.4) .. (1.6,4) node[krouzek] {$y^1$} node[right=8pt] {$C$};
		 \draw (4.4,2.5) node[krouzek] {$z^1$} -- (5.5,4) node[midway, left=1pt] {$T$};
		 \draw (2,1) -- (2,0) node[krouzek] {$x_1^1$} node[above left=4pt] {}; %{$D$};
		 \draw (3,1) -- (3,0) node[krouzek] {$x_2^1$} node[above left=4pt] {}; %{$D$};
		 \draw (4,1) -- (4,0) node[krouzek] {$x_3^1$} node[above left=4pt] {}; %{$D$};
     \filldraw[fill=gray!20,draw=black] (1.6+3.9,4)  -- (1.6+3.9,1) -- (4.4+3.9,1) -- (4.4+3.9,2.5) .. controls (2.8+3.9, 2.4) .. (1.6+3.9,4) node[krouzek] {$y^1_*=y^2$} node[right=22pt] {$C$};
		 \draw (4.4+3.9,2.5) node[krouzek] {$z^2$} -- (5.5+3.9,4)  node[midway, left=1pt] {$T$};
		 \draw (2+3.9,1) -- (2+3.9,0) node[krouzek] {$x_1^2$} node[above left=4pt] {}; %{$D$};
		 \draw (3+3.9,1) -- (3+3.9,0) node[krouzek] {$x_2^2$} node[above left=4pt] {}; %{$D$};
		 \draw (4+3.9,1) -- (4+3.9,0) node[krouzek] {$x_3^2$} node[above left=4pt] {}; %{$D$};
		\filldraw[fill=gray!20,draw=black] (1.6+7.8,4)  -- (1.6+7.8,1) -- (4.4+7.8,1) -- (4.4+7.8,2.5) .. controls (2.8+7.8, 2.4) .. (1.6+7.8,4) node[krouzek] {$y^2_*=y^3$} node[right=22pt] {$C$};
		 \draw (4.4+7.8,2.5) node[krouzek] {$z^3$} -- (5.5+7.8,4) node[krouzek] {$y_{*}^3$} node[midway, left=1pt] {$T$} node[left=7pt] {$C$};
		 \draw (2+7.8,1) -- (2+7.8,0) node[krouzek] {$x_1^3$} node[above left=4pt] {}; %{$D$};
		 \draw (3+7.8,1) -- (3+7.8,0) node[krouzek] {$x_2^3$} node[above left=4pt] {}; %{$D$};
		 \draw (4+7.8,1) -- (4+7.8,0) node[krouzek] {$x_3^3$} node[above left=4pt] {}; %{$D$};
  \end{tikzpicture}
  \end{center}
  \caption{The second step: formula $\Theta$ for $l=\kappa=3$}
  \label{fig:second_step}
}

Before describing the third step, 
we show that the relation
$$
V = \Theta(x^1_1, x^2_1, \dots, x^l_1,x^1_2, \dots, x^l_2,\dots,x^1_\kappa,\dots, x^l_{\kappa})
$$
is ``block-wise $B$-essential'':

\begin{lemma} \label{lem:step_two}
The relation $V$ does not intersect $B^{l\kappa}$ but, for every $i \in [\kappa]$, $V \cap (B^{l(i-1)} \times A^{l} \times B^{l(\kappa-i)}) \neq \emptyset$.
\end{lemma}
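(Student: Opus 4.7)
My plan is to prove the two halves of the lemma separately, each by a direct construction.

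For the nonemptiness half, fix $i \in [\kappa]$ and use the $B$-essentiality of $U$ to pick a tuple $(u_1,\dots,u_\kappa) \in U$ with $u_j \in B$ for all $j \neq i$; a $\Phi$-satisfaction witnessing this tuple assigns some $\beta \in C$ to $y$ and some values to the remaining bound variables. I would replicate this satisfaction across all $l$ blocks of $\Theta$, setting every chain variable ($y^k$ and $y_*^k$) equal to $\beta$ and all other bound variables consistently within each block. Each block $\Psi^k$ is then satisfied: with $y^k = y_*^k = \beta$ the constraint $T(\ldots,y_*^k,\ldots)$ reduces to the original $T(\ldots,y,\ldots)$ of $\Phi$, and both unary constraints are discharged by $\beta \in C$; the boundary identifications $y_*^k = y^{k+1}$ become $\beta = \beta$. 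The resulting element of $V$ has $u_j$ repeated $l$ times in the $j$-th block of coordinates and hence lies in $B^{l(i-1)} \times A^{l} \times B^{l(\kappa-i)}$.

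For the emptiness half, I argue by contradiction. Suppose $(b_j^k) \in V \cap B^{l\kappa}$ and let $\sigma_k$ extend it to a satisfying assignment of $\Psi^k$, with boundary values $\alpha_k := \sigma_k(y_*^k) = \sigma_{k+1}(y^{k+1})$. If $\sigma_k(y^k) = \sigma_k(y_*^k)$ for some $k$ then block $k$'s assignment collapses to a satisfaction of $\Phi$ with $x_j = b_j^k$, forcing $(b_1^k,\dots,b_\kappa^k) \in U \cap B^\kappa = \emptyset$, contradiction. So I may assume $\alpha_{k-1} \neq \alpha_k$ for every $k$. The main step is then a ``glue'' at a boundary $\alpha_k$ with $k \in [l-1]$ (which exists since $l \geq 2$): view $\Phi$ as a hypergraph with the distinguished hyperedge $T$, let $\mathcal{C}_y$ be the connected component of $\Phi \setminus T$ containing $y$, and define a hybrid $\Phi$-assignment by sending $y \mapsto \alpha_k$, taking $\sigma_{k+1}$-values on variables in $\mathcal{C}_y$, and $\sigma_k$-values on variables in the remaining components of $\Phi \setminus T$. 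Every constraint of $\Phi$ is then satisfied: those inside $\mathcal{C}_y$ by block $(k+1)$'s satisfaction of $\Phi \setminus T$ with $y^{k+1} = \alpha_k$, the hyperedge $T$ by block $k$'s $T(\ldots,y_*^k,\ldots)$ with $y_*^k = \alpha_k$, and those in the remaining components by block $k$. The resulting tuple lies in $U$ with every coordinate in $B$, again contradicting $B$-essentiality of $U$.

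The delicate point I expect to be the main obstacle is making the hybrid assignment well-defined when $\Phi \setminus T$ fails to split cleanly, that is, when some non-$y$ scope variable $v_i$ of $T$ also lies in $\mathcal{C}_y$, so the rule ``$\sigma_{k+1}$ on $\mathcal{C}_y$, $\sigma_k$ elsewhere'' is ambiguous on $v_i$. I expect this is where the choice $l = |A|$ enters: pigeonholing on the $l+1$ boundary values $\alpha_0,\dots,\alpha_l$ in $C \subseteq A$ yields a non-consecutive repetition $\alpha_p = \alpha_q$, and performing the glue around this longer loop, with further pigeonholing on the values of the ambiguous bound variables across the intermediate blocks, should reconcile the conflict and still produce an element of $U \cap B^\kappa$.
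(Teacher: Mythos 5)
Your nonemptiness half is correct and is exactly the paper's argument: replicate a witnessing assignment of $U$ diagonally across the $l$ blocks, with all chain variables equal to a common $\beta\in C$. The emptiness half, however, has a genuine gap at precisely the point you flag as delicate, and it is not a technicality that further pigeonholing can repair. Your reduction is sound up to this point: if some block admits a satisfying assignment with $\sigma_k(y^k)=\sigma_k(y_*^k)$, you get a tuple of $U\cap B^\kappa$ and are done. In the paper's language, letting $S=\Psi(y,y_*,x_1,\dots,x_\kappa)$ and $P$ the projection of $S\cap(C^2\times B^\kappa)$ onto the first two coordinates, your boundary values $(\alpha_0,\dots,\alpha_l)$ form a $P$-walk of length $l=|A|$, and what you need is a \emph{loop} $(c,c)\in P$. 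Your single-boundary glue produces one only when deleting the hyperedge $T$ separates $y$ from the other variables in $T$'s scope. But $y$ is a bound variable of degree $2$ or $3$, so after removing $T$ it generally remains connected to the rest of $\Phi$; then $\mathcal{C}_y$ swallows the whole formula, your hybrid degenerates to $\sigma_{k+1}$ with $y\mapsto\alpha_k$, and the constraint $T(\dots,y,\dots)$ fails, since block $k+1$ only certifies $T$ at $y_*^{k+1}=\alpha_{k+1}\neq\alpha_k$. Your fallback, pigeonholing on repeated boundary values, yields only a \emph{closed} $P$-walk; gluing the intermediate blocks produces a satisfying assignment of an iterated composition of $\Psi$, not of $\Phi$, and a closed walk in a digraph is of course not a loop.

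The missing ingredient is algebraic, not combinatorial: your argument for emptiness never uses the hypothesis $B\Jabsorbs\alg{A}$, which is essential here. The paper passes from the closed $P$-walk to a loop via Lemma~\ref{lem:loop}, applied to $P\Jabsorbs\alg{Q}$ (where $Q$ is the projection of $S$ and contains $\Delta_C$); this absorption comes from Lemma~\ref{lem:abspp} and $B\Jabsorbs\alg{A}$, and the proof of Lemma~\ref{lem:loop} contracts the closed walk to a loop by applying the J\'onsson absorption chain to matrices of walks. To complete your proof, replace the glue by this invocation of the Loop Lemma; the loop $(c,c)\in P$ then gives $(c,c,b_1,\dots,b_\kappa)\in S\cap(\Delta_C\times B^\kappa)$ and hence a tuple of $U\cap B^\kappa$, the contradiction you were aiming for.
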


%and let $W$ be the $n$-ary relation defined by
%$$
%W(a_1, \dots, a_n) \ \mbox{ iff }\ V(\underbrace{a_1, \dots, a_1}_{l \times}, \dots, \underbrace{a_n, \dots, a_n}_{l \times})\enspace.
%$$

\begin{proof}
By construction, the tuple 
\[
(\underbrace{a_1, \dots, a_1}_{l \times}, \underbrace{a_2, \dots, a_2}_{l \times}, \dots, \underbrace{a_{\kappa}, \dots a_{\kappa}}_{l \times})\]
is in $V$ whenever $(a_1, \dots, a_{\kappa})$ is in $U$. Therefore, the second claim is true and it is enough to check that $V \cap B^{l\kappa}$ is empty. Suppose otherwise, that is, there exists $(b_1, \dots, b_{l\kappa}) \in V \cap B^{l\kappa}$. 

Let $S = \Psi(y,y_*,x_1, \dots, x_{\kappa})$ and let $Q$ and $P$ be the projections of $S$ and $S \cap (C^2 \times B^{\kappa})$ onto the first two coordinates, respectively. By Lemma~\ref{lem:abspp}, $P \Jabsorbs \alg{Q}$. By construction of $\Psi$, we have that $\Delta_C \subseteq Q$ and that the projection of $S\cap (\Delta_C \times A^{\kappa})$ onto the coordinates $3,4, \dots, \kappa+2$ is equal to $U=\Phi(x_1, \dots, x_\kappa)$. By construction of $\Theta$ from $\Psi$, the tuple $(b_1, \dots, b_{l\kappa}) \in V \cap B^{l\kappa}$ implies the existence of a $P$-walk of length $l$. Since $l=|A|$ this walk must contain a closed $P$-walk.

The assumptions of Lemma~\ref{lem:loop} are satisfied, and therefore $P$ contains a loop $(c,c)$. But this gives us a tuple $(c,c,b_1,\dots,b_\kappa)\in S\cap(\Delta_C\times B^\kappa)$ which means that $(b_1,\dots,b_\kappa)\in U\cap B^\kappa$. This contradicts the $B$-essentiality of $U$.
\end{proof}

In the third step, we modify $\Theta$ using the data provided by the following lemma. 
\begin{lemma} \label{lem:step_three}
Let $l_1, \dots, l_{\kappa}$ be positive integers, $s=l_1+\dots+l_\kappa$, and let $W \leq \alg{A}^s$ be such that $W$ does not intersect $B^s$ but, for each $i \in [\kappa]$, intersects $B^{l_1+\dots+l_{i-1}} \times A^{l_i} \times B^{l_{i+1} + \dots + l_\kappa}$.

Then there exist $m_i \in [l_i]$ for each $i \in [\kappa]$ and $C_{i}^j \leq \alg{A}$ for each $i \in [\kappa], j \in ([l_i] \setminus \{m_i\})$ such that
\begin{align*}
U' = 
\{(a_1^{m_1}, \dots, a_\kappa^{m_\kappa})\mid &(a^1_1, \dots, a^{l_1}_1, a^1_2, \dots, a^{l_2}_2, \dots, a^1_\kappa, \dots, a^{l_\kappa}_\kappa) \in V,  \\
&(\forall i \in [\kappa]) (\forall j \in [l_i], j \neq m_i) \  a_i^j \in C_i^j\}
\end{align*}
is a $B$-essential relation.
\end{lemma}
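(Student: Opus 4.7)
The plan is induction on $s - \kappa = \sum_{i=1}^{\kappa}(l_i - 1)$. The base case $s = \kappa$ (every $l_i = 1$) is immediate, since the hypotheses on $W$ then become exactly the conditions defining a $B$-essential relation of arity $\kappa$: take $m_i := 1$ and the empty family of $C_i^j$'s, so that $U' = W$.

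For the inductive step, fix some $i \in [\kappa]$ with $l_i \geq 2$; the goal is to remove a single coordinate from block $i$ and then invoke the inductive hypothesis on a smaller instance. Consider the block-$i$ ``slice projection''
\[
  \tilde R_i := \bigl\{\,w \in A^{l_i} \,\big|\, \exists\, \bar b_k \in B^{l_k}\ (k \neq i),\ (\bar b_1,\dots,\bar b_{i-1},w,\bar b_{i+1},\dots,\bar b_\kappa) \in W\,\bigr\},
\]
which is a nonempty subpower of $\alg{A}^{l_i}$ disjoint from $B^{l_i}$. I pick a coordinate $j^* \in [l_i]$ and a subalgebra $C \leq \alg{A}$ by the following dichotomy: if some $w^* \in \tilde R_i$ has an entry in $B$ at some position $j$, set $j^* := j$ and $C := B$; otherwise $\tilde R_i \subseteq (A \setminus B)^{l_i}$, and I pick any $j^* \in [l_i]$ with $C := A$. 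Let $W' \leq \alg{A}^{s-1}$ be the subpower obtained from $W$ by restricting its $(i,j^*)$-coordinate to $C$ and projecting that coordinate out.

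It remains to check that $W'$ satisfies the hypotheses of the lemma with block sizes $(l_1,\dots,l_{i-1},\,l_i-1,\,l_{i+1},\dots,l_\kappa)$. The condition $W' \cap B^{s-1} = \emptyset$ holds: in the first case any pre-image tuple would lie in $B^s$, contradicting $W \cap B^s = \emptyset$; in the second, since $l_i - 1 \geq 1$, a pre-image would have at least one block-$i$ entry in $B$, contradicting $\tilde R_i \subseteq (A \setminus B)^{l_i}$. The required nonempty slices for $W'$ transport from those of $W$: for $k \neq i$ the original witness $\bar w^{(k)}$ has its $(i,j^*)$-entry in $B \subseteq C$ and therefore works directly, while for $k = i$ we use (an extension of) $w^*$ in the first case and any original $k=i$ witness in the second. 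Applying the inductive hypothesis to $W'$ produces data $(m_{i'}, C_{i'}^j)$; augmenting the block-$i$ family by setting $C_i^{j^*} := C$ gives the required data for $W$, and the $U'$ associated to $W$ via the augmented data equals, by unfolding definitions, the one returned by the inductive call. The only conceptual step is the above dichotomy on whether $\tilde R_i$ touches $B$; everything else is routine bookkeeping.
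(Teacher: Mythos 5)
Your proof is correct and follows essentially the same strategy as the paper's: induction on the total arity, removing one coordinate of an oversized block at a time, with a dichotomy deciding whether that coordinate gets constrained to $B$ or left free ($C=A$), and then transporting the witnesses. The paper phrases its dichotomy slightly differently (on whether $W$ meets $B^{s-1}\times A$, collapsing a whole block to $B$ at once in the affirmative case), but the underlying idea and the verifications are the same.
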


\begin{proof}
We prove the claim by induction on $s$. The base case $l_1=l_2=\dots=l_{\kappa}=1$ is trivially true as $U'=W$ is already $B$-essential. 

For the induction step assume without loss of generality that $l_\kappa>1$. There are two cases:
\begin{itemize}
\item If $W\cap (B^{s-1}\times A)=\emptyset$, then we can apply the induction hypothesis to $l_1,\dots,l_{\kappa-1},l_\kappa-1$ and the projection of $W$ onto the first $s-1$ coordinates. Let $m_i$, $C^i_j$ %($i\in[\kappa]$, $j\in[l_i]\setminus\{m_i\}$, if $i=\kappa$ then $j\in[l_\kappa-1]$)
 be the data provided and, in addition, set $C_\kappa^\kappa=A$.
\item If $W\cap (B^{s-1}\times A)\neq\emptyset$, then we set $m_\kappa=l_\kappa$, $C^\kappa_1=\dots=C^\kappa_{l_\kappa-1}=B$. For $i<\kappa$ we use the data from applying the claim to $l_1,\dots,l_{\kappa-1},1$ and the projection of $W$ onto the first $l_1+\dots+l_{\kappa-1}$ coordinates and the last coordinate.
\end{itemize}
It is easy to verify that in both cases the resulting relation $U'$ is indeed $B$-essential.
\end{proof}

We apply this lemma for $W=V$, $l_1 = \dots = l_{\kappa}=l$. Note that the assumptions are satisfied by Lemma~\ref{lem:step_two}.

The formula $\Phi'$ is obtained from $\Theta$ as follows. For each $i \in [\kappa]$, we rename the variable $x_i^{m_i}$ to $x_i$ and, for each $i \in [\kappa]$, $j \in [l]$, $j \neq m_i$, we add
the constraint $C_i^j(x_i^j)$. It follows from Lemma~\ref{lem:step_three} that $U' =  \Phi'(x_1, \dots, x_\kappa)$ is a $B$-essential relation.

Zhuk's proof~\cite{Zhuk14} (see also~\cite{Bvaleriote} for an alternative presentation in a slightly simplified situation that $\theta=2$) now shows that by a repeated application of this construction with suitable choices of variables and constraints (plus the simplifications from Subsection \ref{subsec:preproc}) we can obtain a tree formula $\Phi$ such that 
$\Phi(x_1, \dots, x_\kappa)$ is simplified and defines a $B$-essential relation. 

\subsection{A comb formula}\label{subsec:comb}

Recall that by fixing some coordinates of a $B$-essential relation to $B$ we get a $B$-essential relation. It follows that if we select $i_1, \dots, i_\lambda \in [\kappa]$ arbitrarily and form $\Phi'$ by adding to $\Phi$ the unary constraints $B(x_{j})$, $j \in [\kappa] \setminus \{i_1, \dots, i_{\lambda}\}$, then $\Phi'(x_{i_1}, \dots, x_{i_{\lambda}})$ will be $B$-essential. 

The following lemma shows that these variables can be selected so that $\lambda$ is roughly a logarithm of $\kappa$ and $\Phi'$ can be simplified to an especially nice form (see also the proof of~\cite[Theorem~6.1]{Zhuk14}, or section ``Comb definition'' of~\cite{Bvaleriote} for a proof in the case $\theta=2$).

\begin{lemma} \label{lem:comb}
	There exists a pp-formula over $\relstr{A}$ of the form
	\[
	(\exists w_1 \,\dots\,\exists w_{\lambda+1}) \,S_1(z_1,w_1,w_2) \wedge S_2(z_2,w_2, w_3) \wedge \dots \wedge S_{\lambda}(z_{\lambda}, w_{\lambda}, w_{\lambda+1})
	\]
	that defines a $B$-essential relation $R$ of arity $\lambda \geq 3^{|A|}$  (see Figure~\ref{fig:comb}).
\end{lemma}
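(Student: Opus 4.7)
The plan is to locate a long path (the \emph{spine}) in the incidence tree of $\Phi$ and then use Lemma~\ref{lem:abspp} to pp-contract the subtrees hanging off the spine into ternary constraints, obtaining the required comb shape. The spine variables of $\Phi$ will play the role of the quantified variables $w_i$, one leaf chosen per spine hyperedge will play the role of the free variable $z_i$, and the pp-contracted off-spine subtrees become the relations $S_i$.

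First I would locate the spine by a counting argument. In the simplified form of $\Phi$, every bound variable has degree at most $3$ and every constraint has arity at most $\theta$, so the branching factor for the two-step variable-hyperedge-variable walk is bounded by $(3-1)(\theta-1)=2\theta-2$. A tree with $\kappa=\tfrac{1}{2}(2\theta-2)^{3^{|A|}}+1$ leaves and this branching factor must contain a leaf-to-leaf path passing through at least $\lambda\geq 3^{|A|}$ hyperedges (the factor $\tfrac{1}{2}$ accounts for the two sides of a central midpoint contributing leaves to either end of the path). I label the variables of this path $w_1,\dots,w_{\lambda+1}$ and the hyperedges $E_1,\dots,E_{\lambda}$, so that $w_1$ and $w_{\lambda+1}$ are leaves of $\Phi$.

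Next, for each $i$ I would choose a tooth leaf $z_i$ of $\Phi$ reachable from $E_i$ by going off the spine, and define the ternary relation $S_i(z_i,w_i,w_{i+1})$ as follows. Let $T_i$ be the union of $\{E_i\}$ with all subtrees of $\Phi$ hanging off the non-spine variables of $E_i$ (subtrees rooted at non-spine branches of $w_i$ or $w_{i+1}$ are assigned to one adjacent $T_j$ consistently). By Lemma~\ref{lem:abspp}, pp-defining $S_i$ by adding the unary constraint $B$ to every leaf of $T_i$ other than $z_i$ and existentially quantifying all variables of $T_i$ outside $\{z_i,w_i,w_{i+1}\}$ yields a ternary subpower of $\alg A$, which by the preprocessing assumption (Subsection~\ref{subsec:preproc}) is itself a relation of $\relstr A$. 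If some $E_i$ is binary and carries no off-spine subtree, it would be merged with a neighbour via pp-elimination of the intermediate spine variable; the counting argument in the previous step is arranged so that, even after such collapsing, at least $3^{|A|}$ ternary comb steps remain.

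Finally I would verify that the conjunction $(\exists w_1 \dots \exists w_{\lambda+1})\,\bigwedge_{i=1}^{\lambda} S_i(z_i,w_i,w_{i+1})$ defines a $B$-essential relation $R$ of arity $\lambda$. By construction, this formula is equivalent to $\Phi$ with all leaves other than $z_1,\dots,z_\lambda$ restricted to $B$, viewed as a relation in the free variables $z_1,\dots,z_\lambda$. Since $\Phi(x_1,\dots,x_\kappa)$ is $B$-essential, the observation following the definition of a $B$-essential relation (that fixing coordinates to $B$ preserves $B$-essentiality on the remaining ones) gives that $R$ is $B$-essential as required. The main obstacle is the combinatorial spine-selection: one must guarantee that enough spine hyperedges can be made to admit a genuine tooth assignment after the collapsing of any degenerate binary segments, while still retaining the lower bound $\lambda \geq 3^{|A|}$. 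This is essentially the content of the comb-definition arguments in \cite{Zhuk14} and \cite{Bvaleriote}, generalized from the case $\theta=2$ to arbitrary $\theta$.
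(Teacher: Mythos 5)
Your plan follows the same high-level route as the paper: take the simplified tree formula $\Phi$, locate a long path (spine), pp-contract the subtrees hanging off each spine segment into ternary constraints $S_i$ with the non-tooth leaves restricted to $B$, and observe that the resulting comb formula defines a $B$-essential relation because fixing coordinates of a $B$-essential relation to $B$ preserves $B$-essentiality. Your branching-factor bound $(3-1)(\theta-1)=2\theta-2$ is exactly the one the paper uses, and your remark about merging binary teeth-less segments by pp-eliminating the intermediate spine variable corresponds precisely to case~(2) of the paper's construction.

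Where the proposal stops short of a proof is exactly the point you flag as ``the main obstacle'': you assert that the counting argument can be ``arranged'' so that, after collapsing degenerate segments, at least $3^{|A|}$ genuine tooth-bearing comb steps survive, but you do not actually establish this and instead defer to \cite{Zhuk14} and \cite{Bvaleriote}. A bare pigeonhole/diameter bound of the form ``a bounded-branching tree with $\kappa$ leaves has a leaf-to-leaf path of length $\geq\log_{2\theta-2}(2(\kappa-1))$'' is not enough, since such a path could pass through many degree-$2$ segments with no tooth available. The paper closes this gap constructively: starting from an arbitrary leaf $z_1$, it greedily follows the neighbor $w_{i+1}$ that maximizes the number $L(w_{i+1})$ of leaves lying ahead, marches through degree-$2$ (single-continuation) vertices \emph{without} advancing the step counter, and only advances when there is genuine branching, at which point a tooth leaf $z_i$ is guaranteed to exist. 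Since each real step decreases $L$ by a factor of at most $2(\theta-1)$ and the process stops only when $L$ is small, one gets $\kappa\leq\frac{1}{2}(2\theta-2)^{\lambda}+1$ directly, hence $\lambda\geq 3^{|A|}$. So the proposal's outline matches the paper's, but the quantitative heart of the lemma — simultaneously guaranteeing teeth and the $3^{|A|}$ length bound — is exactly the part you leave to the citations rather than prove.
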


\MyFig{
  \begin{center}
    \begin{tikzpicture}
	     \draw[rounded corners=5mm,fill=gray!20] (-0.5,0)--(-0.5,2.5)--(2.5,2.5)--(2.5,1.5)--(0.5,1.5)--(0.5,0)--cycle node[above = 80pt,right = 30pt] {$S_1$};
	     \draw[rounded corners=5mm,fill=gray!10] (1.5,0)--(1.5,2.5)--(4.5,2.5)--(4.5,1.5)--(2.5,1.5)--(2.5,0)--cycle node[above = 80pt,right = 30pt] {$S_2$};
	     \draw[rounded corners=5mm,fill=gray!20] (3.5,0)--(3.5,2.5)--(6.5,2.5)--(6.5,1.5)--(4.5,1.5)--(4.5,0)--cycle node[above = 80pt,right = 30pt] {$S_3$};
	     \draw[rounded corners=5mm,fill=gray!10] (5.5,0)--(5.5,2.5)--(8.5,2.5)--(8.5,1.5)--(6.5,1.5)--(6.5,0)--cycle node[above = 80pt,right = 30pt] {$S_4$};
	     \draw[rounded corners=5mm,fill=gray!20] (7.5,0)--(7.5,2.5)--(10.5,2.5)--(10.5,1.5)--(8.5,1.5)--(8.5,0)--cycle node[above = 80pt,right = 30pt] {$S_5$};
		 \draw(0,2)  (0,0.5) node[krouzek] {$z_1$};
		 \draw(2,2)  (2,0.5) node[krouzek] {$z_2$};
		 \draw(4,2)  (4,0.5) node[krouzek] {$z_3$};
		 \draw(6,2)  (6,0.5) node[krouzek] {$z_4$};
		 \draw(8,2)  (8,0.5) node[krouzek] {$z_5$};
		 \draw(0,2) node[krouzek] {$w_1$}  (2,2) node[krouzek] {$w_2$}  (4,2) node[krouzek] {$w_3$}  (6,2) node[krouzek] {$w_4$}  (8,2) node[krouzek] {$w_5$}  (10,2) node[krouzek] {$w_6$};
		 
  \end{tikzpicture}
  \end{center}
  \caption{A comb definition of the relation $R$.}
  \label{fig:comb}
}
We remark that a similar formula from~\cite[Theorem~6.1]{Zhuk14} has two additional binary constraints. The difference is negligible.

\begin{proof}
Let $\Phi(x_1, \dots, x_{\kappa})$ be a simplified tree formula defining a $B$-essential relation.
We start by fixing an arbitrary leaf $z_1\in\{x_1,\dots,x_\kappa\}$. For every variable $u\neq z_1$ we define $L(u)$ to be the number of leaves (that is, free variables) lying outside of $\Branch(u; z_1)$.  

In the first step of our construction we introduce a new variable $w_1$, add the constraint $w_1=z_1$, and choose $w_2\in\Neigh(z_1)$ so that $L(w_2)$ is maximal. Let $i>1$ and assume that $z_{i-1}$, $w_{i-1}$ and $w_i$ are already defined. The construction splits into several cases:

\begin{enumerate}[(1)]
\item If $w_i$ is a leaf, then we set $\lambda = i-1$ and stop the construction process.
\item Else, if $\Neigh(w_i)\setminus\Branch(w_i;w_{i-1})$ contains a single element $v$, then we redefine $w_i=v$ and repeat this construction step.

\item Else, we choose $w_{i+1}\in\Neigh(w_i)\setminus\Branch(w_i;w_{i-1})$ so that $L(w_{i+1})$ is maximal, pick any leaf $z_i\in\Branch(w_{i+1};w_i)\setminus\Branch(w_i;w_{i-1})$, $z_i\neq w_{i+1}$, and increment $i$.
\end{enumerate}
Let $\Phi'$ be the formula obtained from $\Phi\wedge(w_1=z_1)$ by fixing to $B$ all the free variables of $\Phi$ that have not been selected, that is, by adding the unary constraints $B(x_j)$ whenever $x_j\notin\{z_1,\dots,z_\lambda\}$.

The relations $S_i(z_i,w_i,w_{i+1})$ for $1<i<\lambda$ are obtained by taking the induced substructure of $\Phi'$ on the set of variables $(\Branch(w_{i+1}; w_i)\setminus\Branch(w_i;w_{i-1}))\cup\{w_i\}$ and existentially quantifying over all variables except for $z_i$, $w_i$ and $w_{i+1}$. The relation $S_1$ is obtained similarly using the set of variables $\Branch(w_2;w_1)$.

It is easy to see that the resulting ``comb'' formula pp-defines the same relation as $\Phi'$ which is $B$-essential by the argument from the beginning of this subsection. 

It remains to verify that $\lambda\geq 3^{|A|}$. In the first step, as $z_1$ is a leaf, we have that $|\Neigh(z_1)|\leq \theta-1$ and  $\sum_{u\in\Neigh(z_1)}L(u)=\kappa-1$. Note that redefining of $w_i$ in case (2) does not change the value of $L(w_i)$. It follows that $L(w_2)\geq\frac{\kappa-1}{\theta-1}$. In the consequent steps, the degree of $w_i$ is at most 3. Thus there are at most $2(\theta-1)$ neighbors to choose from and $L(w_{i+1})\geq\frac{L(w_i)}{2(\theta-1)}$. This gives us a lower bound 
$$
L(w_\lambda)\geq\frac{\kappa-1}{2^{\lambda-2}(\theta-1)^{\lambda-1}}.
$$
The fact that the construction stopped at $\lambda$ means that $L(w_\lambda)\leq 2(\theta-1)$. Comparing these bounds gives us
$$
\kappa\leq \frac{1}{2}(2\theta-2)^\lambda+1
$$
which implies that $\lambda\geq 3^{|A|}$. 
\end{proof}

\subsection{Contradiction} \label{subsec:contradiction}
Take a pp-definition of a $B$-essential relation $R \leq \alg{A}^{\lambda}$ as in Lemma~\ref{lem:comb}.

The following terminology will be useful. An \emph{$(i,j)$-path from $a$ to $a'$ supported by $(c_{i}, \dots, c_{j-1})$}, where $1\leq i<j\leq \lambda+1$ and $a,a',c_{i}, \dots, c_{j-1} \in A$, is a tuple $(a = a_i,a_{i+1}, \dots, a_j = a')$ of elements of $A$ such that $(c_{l},a_{l},a_{l+1}) \in S_{l}$ for every $l \in \{i, \dots, j-1\}$. Let $\Pi_{i,j}(u,v,\bar z)$ be the pp-formula describing the existence of an $(i,j)$-path from $u$ to $v$ supported by $\bar z$. Observe that a tuple $(c_1, \dots, c_\lambda)$ is in $R$ if and only if there exists some $(1,\lambda+1)$-path supported by $(c_1, \dots, c_\lambda)$, that is, 
$$
\bar c\in R\ \text{ if and only if }\ (\exists u\,\exists v)\,\Pi_{1,\lambda+1}(u,v,\bar c).
$$

For each $i \in \{2,\dots,\lambda\}$ we define two subuniverses $G_i, H_i$ of $\alg{A}$: $a \in G_i$ if and only if there exists a $(1,i)$-path to $a$ supported by a tuple from $B^{i-1}$, and $a \in H_i$ if and only if there exists an $(i,\lambda+1)$-path from $a$ supported by a tuple from $B^{\lambda-i+1}$. The sets $G_i$ and $H_i$ are indeed subuniverses of $\alg{A}$, their pp-definitions are the following:
\begin{align*}
&a\in G_i\ \text{ if and only if }\ (\exists u\,\exists\bar z)\,\Pi_{1,i}(u,a,\bar z)\wedge B^{i-1}(\bar z)\\
&a\in H_i\ \text{ if and only if }\ (\exists v\,\exists\bar z)\,\Pi_{i,\lambda+1}(a,v,\bar z)\wedge B^{\lambda-i+1}(\bar z).
\end{align*}
For every $i \in \{2,\dots,\lambda\}$, the subuniverses $G_i$ and $H_i$ are nonempty (since $B^{i-1} \times A^{\lambda-i+1} \cap R \neq \emptyset$ and $A^{i-1} \times B^{\lambda-i+1} \cap R \neq \emptyset$) and disjoint (since $B^\lambda \cap R = \emptyset$).

Now we use the fact that the arity $\lambda$ of $R$ is large. There are $(3^{|A|}-2^{|A|+1}+1)$ ordered pairs of disjoint nonempty subsets of $A$ and $\lambda-1$ choices for $i$, so, since $\lambda-1 \geq 3^{|A|}-1 > (3^{|A|}-2^{|A|+1}+1)$, there must be two different $k < l$ such that $(G_k,H_k) =(G_l,H_l)$.  We fix such $k,l$ and denote 
\[
G = G_k = G_l, \quad H = H_k = H_l\enspace.
\] 
Finally, we define a subuniverse $Q\leq\mathbf A^2$ so that $(a,c) \in Q$ if and only if there exists a $(k,l)$-path from $a$ to $c$,
and a subuniverse $P\leq\alg{A}^2$ if and only if there exists a $(k,l)$-path from $a$ to $c$ supported by a tuple from $B^{l-k}$. 

We verify the assumptions of Lemma~\ref{lem:walk}.
\begin{itemize}
\item $P$ and $Q$ are subuniverses of $\alg{A}^2$ and $P \Jabsorbs Q$: This follows from Lemma~\ref{lem:abspp} since $B \Jabsorbs \alg{A}$ and these sets can be defined by the pp-formulas
\begin{align*}
&(u,v)\in Q\ \text{ if and only if }\ (\exists\bar z)\,\Pi_{k,l}(u,v,\bar z)\\
&(u,v)\in P\ \text{ if and only if }\ (\exists\bar z)\,\Pi_{k,l}(u,v,\bar z)\wedge B^{k-l}(\bar z)\\
\end{align*}
\item 
$(a,c) \in Q$ for some $a \in G$ and $c \in H$: As $R$ is $B$-essential, there exists a $(1,\lambda+1)$-path $(a_1, \dots, a_{\lambda+1})$ supported by a tuple from $B^{k-1}\times A^{l-k} \times B^{\lambda-l+1}$.
Then $a_k \in G_k=G$, $a_l \in H_l=H$, and $(a_k,a_l) \in Q$ by definitions, so we can set $a=a_k$ and $c=a_l$ to satisfy the claim.
\item 
For every $c \in G$ there exists $a \in G$ such that $(a,c) \in P$: Consider a vertex $c \in G = G_l$. By the definition of $G_l$, there exists a $(1,l)$-path $(a_1,\dots, a_{l-1},a_l=c)$ supported by a tuple from $B^{l-1}$, thus $a_k \in G_k = G$ and $(a_k,c) \in P$. This shows that $a=a_k$ satisfies the claim. 
\item 
For every $a \in H$ there exists $c \in H$ such that $(a,c) \in P$: The proof is similar to the previous item.
\end{itemize}
By Lemma~\ref{lem:walk} there exists a directed $P$-walk from $G$ to $H$. Since $G$ and $H$ are disjoint, the following item contradicts the conclusion of Lemma~\ref{lem:walk} and finishes the proof of Theorem~\ref{thm:core} and Theorem~\ref{thm:main}.

\begin{itemize}
\item 
If $a \in G$ and $(a,c) \in P$, then $c \in G$:
If $a \in G = G_k$ and $(a,c) \in P$, then there exists a $(1,k)$-path $(a_1, \dots, a_k=a)$ supported by $(b_1, \dots, b_{k-1}) \in B^{k-1}$ and a $(k,l)$-path $(a=a_k,a_{k+1}, \dots, a_l=c)$ supported by $(b_k, \dots, b_{l-1})$. Then $(a_1, \dots, a_l)$ is a $(1,l)$-path supported by $(b_1,\dots, b_{l-1})$, hence $a_l=c \in G_l = G$. This proves the claim. 
\end{itemize}

\subsection{Remarks on the proof} \label{subsec:barto_vs_zhuk}

Zhuk's approach used in this paper is different from the proof of the special (congruence meet-semidistributive) case~\cite{Bulin14} generalizing the proof in~\cite{Bar13} that a finite, finitely related algebra with J\'onsson terms has a near unanimity term. In~\cite{Bar13, Bulin14}, a tree definition of a suitable relation was easily derived from existing results on the constraint satisfaction problem~\cite{BK09a,BK14}.

Using Marcin Kozik's improvement of the algebraic machinery from~\cite{BKW12}, it is possible to prove the main result in a cleaner way, similar to~\cite{Bar13}. We have chosen the presented approach since the required results of Kozik are not yet written up and our proof is also a bit more elementary. 

Our upper bound on $\kappa$ slightly improves upon the related results in \cite{Zhuk14,Bar13}. In Zhuk's original paper \cite{Zhuk14} the upper bound on the arity of a near-unanimity operation is $((|A|-1)(\theta-1))^{3^{|A|}}+2$. In \cite{Bar13,Bvaleriote} and the present paper the factor $(|A|-1)$ is reduced to 2 by a simple trick of limiting the maximum degree of the tree formula to 3 (see Subsection \ref{subsec:preproc}). In \cite{Bar13,Bulin14} the upper bound (for $\theta=2$) is $4^{8^{|A|}}+1$. The factor $8^{|A|}$ comes from a pigeonhole argument counting all triples of subsets of $A$. This is suboptimal (as already discussed in \cite[Lemma 6.5]{Bar13}) and can in fact be readily improved to $3^{|A|}$.

On the other hand, Zhuk's paper \cite{Zhuk14} provides the following lower bounds (which can be used in our case too): $\kappa\geq(\theta-1)^{2^{|A|-2}}$ for $\theta\geq 3$ and $|A|\geq 3$, and $\kappa\geq2^{2^{|A|-3}}$ for $\theta=2$ and $|A|\geq 4$.

\section{The complexity of deciding absorption}

We now discuss the computational complexity of the absorption problem whose input is a relational structure $\relstr{A}$ (relations are given by lists of tuples) and a subuniverse $B$ of $\alg{A} = \Pol \relstr{A}$; the problem is to decide whether $B \absorbs \alg{A}$. (Recall that we can without loss of generality assume that $\relstr{A}$ contains all the singletons. Also, the assumption that $B$ is a subuniverse is not limiting for the decidability question: a naive algorithm puts testing whether $B$ is a subuniverse in co-NEXPTIME \cite{Willard10}.)

An ineffective approach, still good enough to show that the absorption problem is decidable, is based on Theorem~\ref{thm:core} which implies that it is enough to check operations of arity $\kappa = \frac{1}{2}(2\theta-2)^{3^{|A|}}+1$.

A better strategy is to directly use the main theorem, Theorem~\ref{thm:main}, which shows that it is enough to verify $B \Jabsorbs \alg{A}$. A straightforward approach would be to find all ternary idempotent polymorphisms of $\relstr{A}$ and look for a J\'onsson absorbing chain among them. A better way to test whether $B \Jabsorbs \alg{A}$ is to use the following local characterization of J\'onsson absorption from \cite{BKazda}.

\begin{theorem}\label{thm:jabs_local}
  Let $\alg{A}$ be a finite idempotent algebra and $B\leq \alg{A}$. Then $B\Jabsorbs
  \alg{A}$ if and only if 
  for every $a,c,d\in A$ and every $b_1,b_2\in B$, the digraph $\relstr{G}=(A,E)$ with the
  edge set
  \[
    E=\{(u,v)\mid \exists b\in B, (b,u,v)\in \langle(b_1,a,a), (b_2,c,c),
    (d,a,c)\rangle\}
  \]
contains a directed path from $a$ to $c$.
\end{theorem}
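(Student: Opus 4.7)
The forward direction is direct. Given a J\'onsson absorption chain $d_0, \dots, d_n$ witnessing $B \Jabsorbs \alg{A}$ and parameters $a, c, d \in A$, $b_1, b_2 \in B$, I would apply the transposed ternary terms $t_i(x, y, z) := d_i(x, z, y)$ (coordinate-wise polymorphisms of $\alg{A}$) to the three generators $(b_1, a, a)$, $(b_2, c, c)$, $(d, a, c)$ of the subalgebra named in the statement. The resulting triple $(d_i(b_1, d, b_2), d_i(a, a, c), d_i(a, c, c))$ lies in $\langle (b_1, a, a), (b_2, c, c), (d, a, c)\rangle$, and since $d_i(B, A, B) \subseteq B$ and $b_1, b_2 \in B$, its first coordinate is in $B$. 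Hence $(d_i(a, a, c), d_i(a, c, c))$ is an edge of the digraph $\relstr{G}$. The J\'onsson identities $d_i(a, c, c) = d_{i+1}(a, a, c)$ glue these edges together, while the boundary conditions $d_0 = \pi_1$, $d_n = \pi_3$ pin the endpoints of the resulting walk at $a$ and $c$, yielding the required path.

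For the backward direction, assuming the local path condition holds for every choice of parameters, I aim to construct a J\'onsson absorption chain globally. My plan is a two-stage amalgamation. Stage one: for each quintuple $\iota = (a_\iota, c_\iota, d_\iota, b_1^\iota, b_2^\iota)$, fix a witness walk from $a_\iota$ to $c_\iota$ in the associated local digraph, giving a sequence of ternary terms whose $\iota$-block realizes the walk. Since there are only finitely many quintuples, a uniform length $N$ can be arranged by padding each walk with repetitions. Stage two: produce, for each step $i \in \{0, \dots, N\}$, a single ternary term $s_i$ of $\alg{A}$ that simultaneously witnesses the $i$-th edge at every $\iota$-coordinate. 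The natural setting is the subalgebra $\alg{S} \leq \alg{A}^{3I}$ generated by the three ``universal'' tuples whose $\iota$-block equals the corresponding local generator; an element $s(g_1, g_2, g_3) \in \alg{S}$ realizes, block-wise, the triple $(s(b_1^\iota, b_2^\iota, d^\iota), s(a_\iota, c_\iota, a_\iota), s(a_\iota, c_\iota, c_\iota))$, so the task becomes finding a coherent sequence of elements in $\alg{S}$. Setting $d_i(x, y, z) := s_i(x, z, y)$ then recovers the J\'onsson chain.

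The main obstacle lies in stage two: the J\'onsson identities $s_i(x, y, y) = s_{i+1}(x, x, y)$ must hold as genuine identities on $A^2$, not merely on the specific $(a_\iota, c_\iota)$-pairs indexed by $I$. Choosing $I$ to sweep every pair $(a, c) \in A^2$ (and every triple $(b_1, d, b_2)$ required by absorption) should ensure that pointwise agreement across the relevant $\iota$-blocks does translate to identity-level equality. A further subtlety is whether the locally chosen walks can be made consistent so that a single term in $\alg{S}$ simultaneously witnesses the step at each $\iota$; I would attack this by a pigeonhole/compactness argument combined with Lemma~\ref{lem:loop} applied in $\alg{S}$ to create closed $P$-walks that can be spliced in as padding, forcing the local witnesses to align. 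Verifying that the hypothesis encodes precisely the connectivity required to run this amalgamation is the technical heart of the argument.
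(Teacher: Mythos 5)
The paper itself offers no proof of this theorem --- it is imported verbatim from \cite{BKazda} --- so there is no internal argument to compare against; I evaluate your proposal on its own merits. Your forward direction is correct and complete: applying $t_i(x,y,z)=d_i(x,z,y)$ to the three generators yields $(d_i(b_1,d,b_2),\,d_i(a,a,c),\,d_i(a,c,c))\in\langle(b_1,a,a),(b_2,c,c),(d,a,c)\rangle$, whose first coordinate lies in $B$ because $d_i(B,A,B)\subseteq B$; the identities $d_i(x,y,y)\approx d_{i+1}(x,x,y)$ together with $d_0=\pi_1$ and $d_n=\pi_3$ concatenate the resulting edges into a directed walk from $a$ to $c$.

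The backward direction is where the substance of the theorem lies, and your write-up stops exactly at the hard step. The reduction you set up is the right one: indexing by all quintuples $\iota$, a J\'onsson absorption chain is precisely a $B^{I}$-colored directed path from $(a_\iota)_\iota$ to $(c_\iota)_\iota$ in the product digraph whose edges must each be witnessed by a \emph{single} element of the subalgebra generated by the three universal tuples. The hypothesis, however, only yields a path in each coordinate projection of that digraph, and connectivity of every projection does not imply connectivity of the product: your local walks may use different terms at different coordinates, and an edge of the product digraph requires one term to witness the step simultaneously everywhere. Padding with repetitions equalizes lengths but does nothing to align witnesses. Worse, the tool you propose for the alignment, Lemma~\ref{lem:loop}, presupposes a J\'onsson absorption $P\Jabsorbs\alg{Q}$ --- which is exactly what you are trying to construct --- so invoking it at this stage is circular unless some absorption is first established by independent means. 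This local-to-global amalgamation is the actual content of the theorem in \cite{BKazda}, where it is carried out by a separate inductive argument; as written, your proposal acknowledges the obstacle but does not overcome it, so the backward direction remains unproved.
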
 

The subuniverse $R=\langle(b_1,a,a)$, $(b_2,c,c)$, $(d,a,c)\rangle \leq \alg{A}^3$ 
can be visualised as a colored digraph: an element $(b,u,v)$ is regarded as the edge $(u,v)$ colored by $b$. 
The theorem asks for a $B$-colored directed path from $a$ to $c$.

Theorem~\ref{thm:jabs_local} gives us a straightforward NP algorithm to test whether a given subuniverse $B$ is J{\' o}nsson absorbing as claimed in Theorem~\ref{thm:complexity}: We try all possible $a,c,d\in A$,
$b_1,b_2\in B$ (there are polynomially many such choices).  For each choice we need a polynomial witness for existence of a $B$-colored path from $a$ to $c$. Note that such a path, if it exists, can be given by polynomially many (at most $|A|$) triples from $R$. The fact that a certain triple $(b,u,v)$ lies in $R$ can be witnessed by giving a table of some ternary polymorphism $\varphi$ that generates $(b,u,v)$ from the tuples $(b_1,a,a), (b_2,c,c), (d,a,c)$; it can be verified in polynomial time that $\varphi$ is a polymorphism of $\relstr A$.

%Theorem~\ref{thm:jabs_local} gives us a straightforward NP  algorithm to test whether a given subuniverse $B$ is J{\' o}nsson absorbing as claimed in Theorem~\ref{thm:complexity}: We try all possible $a,c,d\in A$, $b_1,b_2\in B$ (there are polynomially many such choices) and for each choice we use the three triples $(b_1,a,a), (b_2,c,c), (d,a,c)$ to generate a subuniverse $R\leq \alg{A}^3$. Having a list of elements of $R$, we find those that have their first coordinate in $B$, record their last two entries as edges in a new relation $E \subseteq A^2$ and check whether there is a path from $a$ to $c$ in the graph $(A,E)$. 
%All together, we have polynomial time algorithm for 
%deciding J\'onsson absorption in idempotent algebras.

\section{Conclusion}

Our main result, that a subuniverse of a finitely related finite algebra is absorbing if and only if it is J\'onsson absorbing, can be thought of as a ``decomposed version'' of the main result of~\cite{Bar13} which says that all the singletons of a finitely related finite algebra are absorbing if and only if they are J\'onsson absorbing.
In~\cite{Bvaleriote} it was shown that a finitely related finite algebra has Gumm terms if and only if it has cube terms. Is there a decomposed version of this result, that is, a useful notion of ``cube absorption'' which coincides with Gumm absorption for finitely related algebras?

%\bigskip
%\footnotesize
%\noindent\textit{Acknowledgments.}

\bibliographystyle{plain}
%\begin{thebibliography}{SK}
\normalsize
\baselineskip=17pt

\bibliography{absrelstr}

%\end{thebibliography}

\end{document}